\definecolor{red}{rgb}{1,0,0}
\definecolor{blue}{rgb}{.2,.2,.8}
\def\m{\mu}
\def\T{\mathcal T_r}
\def\P{\mathcal P}
\def\D{\mathcal D}
\def\bD{\mathfrak D}
\def\bO{{\mathfrak O}}
\def\N{\mathbb N}
\def\del{\partial}
\newtheorem{theorem}{Theorem}
\newcommand{\ds}{\displaystyle}
\newcommand{\tpmod}[1]{{\@displayfalse\pmod{#1}}}
\begin{document}

\title[]{Beck-type companion identities for Franklin's identity via a modular refinement}

\author{Cristina Ballantine}\address{Department of Mathematics and Computer Science\\ College of the Holy Cross \\ Worcester, MA 01610, USA \\} 
\email{cballant@holycross.edu} 
\author{Amanda Welch} \address{Department of Mathematics and Computer Science\\ College of the Holy Cross \\ Worcester, MA 01610, USA \\} \email{awelch@holycross.edu}

\begin{abstract}The original Beck conjecture, now a theorem due to Andrews, states that the difference in the number of parts in all partitions into odd parts and the number of parts in all strict partitions is equal to the number of partitions whose set of even parts has one element, and also to the number of partitions with exactly one part repeated. This is a companion identity to Euler's identity. The theorem has been generalized by Yang to a companion identity to Glaisher's identity. Franklin generalized Glaisher's identity, and in this article, we provide a Beck-type companion identity for Franklin's identity and prove it via a modular refinement. We provide both analytical and combinatorial proofs.  Andrews' and Yang's respective theorems fit naturally into this very general frame. We also give a generalization to Franklin's identity of the second Beck-type companion identity proved by Andrews and Yang in their respective work. \end{abstract}

\maketitle

\noindent {\bf Keywords:} {partitions,  Franklin's identity, Beck-type identities, parts in partitions}

\noindent{\bf MSC 2010:}  05A17,  11P83

\section{Introduction} \label{intro}

Partition identities are often statements asserting that the set $\P_X$ of partitions of $n$ subject to condition $X$ is equinumerous to the set $\P_Y$ of partitions of $n$ subject to condition $Y$. A Beck-type identity is a companion  identity to $|\P_X|=|\P_Y|$ asserting that the difference  between the number of parts in all partitions in $\P_X$ and the number of parts in all partitions in $\P_Y$ is related to  $|\P_{X'}|$ and also to $|\P_{Y'}|$, where $X'$, respectively $Y'$, is a condition on partitions that is a very slight relaxation of  condition $X$, respectively  $Y$. These types of companion identities were first noticed in  two conjectures by George Beck which appeared in The On-Line Encyclopedia of Integer Sequences  on the pages for sequences A090867 \cite{B1} and A265251 \cite{B2}. They were companion identities to Euler's identity. We will explain his conjectures and the history of their proofs after introducing some notation. 

Let $n$ be a non-negative integer.  A \textit{partition} $\lambda$ of $n$ is a non-increasing sequence of positive integers $\lambda=(\lambda_1, \lambda_2, \ldots, \lambda_\ell)$ that add up to $n$, i.e., $\ds\sum_{i=1}^\ell\lambda_i=n$. The numbers $\lambda_i$ are called the \textit{parts} of $\lambda$ and $n$ is called the \textit{size} of $\lambda$.  The number of parts of the partition is called the \textit{length} of $\lambda$ and is denoted by $\ell(\lambda)$. 

We will also use the exponential notation for parts in a partition. The exponent of a part is the multiplicity of the part in the partition. For example, $(5^2, 4, 3^3, 1^2)$ denotes the partition $(5,5, 4, 3, 3, 3, 1, 1)$. Mostly, we will use the exponential notation when referring to rectangular partitions, i.e., partitions in which all parts are equal. Thus, we write $(m^i)$ for the partition consisting of $i$ parts equal to $m$. 

%The Ferrers diagram of a partition $\l=(\l_1, \l_2, \ldots, \l_{\ell})$ is an array of left justified boxes such that the $i$th row from to top contains $\l_i$ boxes. For example, the Ferrers diagram of the partition $(5,5, 3, 3, 2,1)$ is shown below. $$\tiny\ydiagram{5, 5, 3, 3,2, 1}$$

Next, we define two  operations on partitions. Let  $\lambda=(\lambda_1, \lambda_2, \ldots, \lambda_{\ell(\lambda)})$ and $\m=(\m_1, \m_2, \ldots, \m_{\ell(\m)})$ be two partitions. %we define partitions $\l\cup \m$ and $\l\setminus \m$.
We denote by $\lambda\cup \mu$   the partition whose  parts are precisely the parts of $\lambda$ and $\m$, i.e., $\lambda_1, \lambda_2, \ldots, \lambda_{\ell(\lambda)}, \m_1, \m_2, \ldots, \m_{\ell(\m)}$, arranged in non-increasing order. 
The partition $\lambda \setminus \m$ is defined only if all parts of $\mu$ (considered with multiplicity) are also parts of $\lambda$. Then, $\lambda \setminus \m$ is the partition obtained from $\lambda$ by removing all parts of $\mu$ (with multiplicity). 

%The partition $\l+\mu$ is  the partition with parts $\l_1+\m_1,\l_2+\m_2, \ldots, \l_k+\m_k$, where $k=\max(\ell(\l), \ell(\m))$ and, if $\ell(\l)<k$ or $\ell(\m)<k$, the respective partition is padded with parts equal to $0$. 

%If $\ell(\m)\leq \ell(\l)$ and $\m_i\leq \l_i$ for all $1\leq i\leq \ell(\l)$, we define the partition $\l-\mu$ as  the partition $(\l_1-\m_1,\l_2-\m_2, \ldots, \l_k-\m_k)$, where, if $\ell(\m)<\ell(\l)$, the  partition $\mu$ is padded with parts equal to $0$, i.e. $\m_{\ell(\mu)+1}=\cdots =\m_{\ell(\l)}=0$. 

%For a  non-negative integer $n$,  a \textit{composition} $\a$ of $n$ is a sequence of positive integers $\a=(\a_1, \a_2, \ldots, \a_k)$ that add up to $n$. Thus $(3,2, 3, 1)$ and $(3, 1, 3, 2)$ are different compositions of $9$. The sum and difference of compositions  are defined analogous to the sum and difference of partitions. 

Throughout the article, we make use of the following notation.

We denote by $\mathcal{O}_{j,r}(n)$, respectively $\mathcal{O}_{\leq j, r}(n)$,  the set of   partitions of $n$ with  exactly $j$, respectively at most $j$, different parts (possibly repeated) divisible by  $r$ and by  
$\D_{j,r}(n)$, respectively $\D_{\leq j, r}(n)$, the set of partitions of $n$ in which exactly $j$, respectively at most $j$, different parts are repeated at least $r$ times and all other parts appear no more than $r-1$ times.

Euler's partition identity states that for all non-negative integers $n$,  $$|\mathcal O_{0,2}(n)|=|\mathcal D_{0,2}(n)|.$$ 

Glaisher's identity generalizes Euler's identity and states that for all non-negative integers $n$ and all integers $r\geq 2$,  \begin{equation}\label{gl}|\mathcal O_{0,r}(n)|=|\mathcal D_{0,r}(n)|.\end{equation}

In 1883, Franklin \cite{F83} proved the following generalization of Glaisher's identity. 

\begin{theorem}[Franklin] \label{FT} For all non-negative integers $n, j$ and all integers $r\geq 2$, \begin{equation}\label{FTid}|\mathcal O_{j,r}(n)|=|\mathcal D_{j,r}(n)|.\end{equation} 
\end{theorem}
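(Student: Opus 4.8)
The plan is to construct an explicit bijection between $\mathcal{O}_{j,r}(n)$ and $\mathcal{D}_{j,r}(n)$ that refines Glaisher's bijection. Recall the classical Glaisher map underlying \eqref{gl}: given a partition $\lambda$ counted by $\mathcal{O}_{0,r}(n)$ (no part divisible by $r$), write each part's multiplicity in base $r$; a part $m$ appearing with multiplicity $a = \sum_k a_k r^k$ (with $0 \le a_k \le r-1$) is replaced by the parts $m r^k$ each taken $a_k$ times. This produces a partition in which no part appears $r$ or more times, i.e.\ an element of $\mathcal{D}_{0,r}(n)$, and the map is reversible by collecting, for each residue class, all the parts that are the same odd-to-$r$ base times a power of $r$. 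My aim is to track what happens to parts divisible by $r$ and parts repeated at least $r$ times under (a suitable extension of) this correspondence.

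The key idea I would pursue is to decompose an arbitrary partition according to the $r$-adic structure of its parts and multiplicities simultaneously. Concretely, I would first factor each part as $m = r^s \cdot b$ with $r \nmid b$, so every part is indexed by a pair $(b,s)$ with $b$ coprime-to-$r$ (more precisely $r \nmid b$) and $s \ge 0$. For a fixed base $b$, the collection of parts of the form $b r^s$ together with their multiplicities encodes a nonnegative integer via the ``weight'' $\sum_s (\text{mult of } b r^s)\, r^s$, and I would set up the bijection base-by-base. The Glaisher involution, applied within each fixed base $b$, exchanges the presence of a part divisible by $r$ (that is, some $s \ge 1$) with the presence of a repeated part (multiplicity $\ge r$). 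The main structural claim I would isolate and prove is that, for each base $b$, Glaisher's procedure gives a bijection between the data ``some $b r^s$ with $s\ge 1$ occurs'' and the data ``some $b r^s$ is repeated $\ge r$ times,'' and moreover that it preserves the \emph{number of distinct bases} exhibiting each of these two phenomena. Summing over bases, the number of distinct parts divisible by $r$ on the $\mathcal{O}$ side matches the number of distinct parts repeated $\ge r$ times on the $\mathcal{D}$ side, which is exactly the statistic $j$ on each side.

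The main obstacle, and the step deserving the most care, is verifying that $j$ is preserved \emph{exactly} rather than merely $\le j$ versus $\ge j$. The subtlety is that a single base $b$ could in principle contribute to the count on one side but not the other, or contribute differently, because the base-$r$ digit expansion that drives Glaisher's map can interact nontrivially with divisibility by $r$: a part $br^s$ with $s \ge 1$ on the $\mathcal{D}$ side may arise from multiplicities on the $\mathcal{O}$ side that themselves have several nonzero base-$r$ digits. I would therefore want to prove the sharp per-base statement that within a fixed base $b$, the partition contributes to the ``$j$-count'' on the $\mathcal{O}$ side if and only if it contributes on the $\mathcal{D}$ side, i.e.\ $\lambda$ has a part divisible by $r$ in base $b$ exactly when its image has a part repeated $\ge r$ times in base $b$. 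This is essentially the observation that a base-$b$ block is ``trivial'' (only the part $b$ itself, with multiplicity $\le r-1$) on one side if and only if it is trivial on the other, so nontriviality of the base-$b$ block is the common indicator. Establishing this equivalence cleanly, and confirming the map is a genuine involution (or at least a bijection with explicit inverse) respecting sizes, is where the real work lies; once it is in hand, summing the per-base contributions yields \eqref{FTid}.
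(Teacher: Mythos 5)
Your overall vision---decomposing a partition into its base-$b$ blocks $\{br^s : s\ge 0\}$, $r\nmid b$, and building the bijection base by base---is sound; indeed, Franklin's bijection (with Glaisher's map as the $j=0$ ingredient) can be described in exactly those terms. But as written the proposal has two genuine gaps. First, the map is never actually constructed on general blocks. Glaisher's procedure, as you recall it, expands a \emph{multiplicity} in base $r$ and so is defined only on blocks supported at $s=0$; a block of a partition in $\mathcal{O}_{j,r}(n)$ may already contain parts $br^s$ with $s\ge 1$ (these are precisely the parts divisible by $r$), and the only canonical extension---re-expanding the block's total weight $\sum_s c_{b,s}r^s$ in base $r$---is not injective. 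For $r=2$ the blocks ``$b$ with multiplicity $2$'' and ``$2b$ with multiplicity $1$'' have the same weight and the same re-expansion, so this candidate map collapses $\mathcal{O}$-side and $\mathcal{D}$-side data together rather than bijecting them. Making the map injective requires treating the parts with $s\ge 1$ differently from the multiplicity digits, and that construction is exactly what is missing.

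Second, and more fundamentally, the per-base statement you propose to prove---base $b$ ``contributes to the $j$-count'' on the $\mathcal{O}$ side if and only if it contributes on the $\mathcal{D}$ side---is too weak to give the theorem. The statistic $j$ counts distinct \emph{parts}, not distinct bases, and one base can carry several of them: for $r=2$, the partition $(4,2)$ has two different parts divisible by $2$, both of base $b=1$, and $(2,2,1,1)$ has two different parts repeated twice, again both of base $1$. Summing a $0/1$ indicator over bases yields the number of nontrivial bases, not $j$, so the step ``summing over bases, the number of distinct parts \dots matches \dots the statistic $j$'' is a non sequitur; what is needed is the exact per-base count identity (the number of $s\ge 1$ with $c_{b,s}>0$ equals the number of $s$ whose image multiplicity is $\ge r$), which is itself Franklin's identity restricted to parts of the form $br^s$. (Your proposed reduction of the iff to ``nontriviality'' is also false on its own terms: the block consisting of $b$ alone with multiplicity $\ge r$ is nontrivial yet contributes $0$ to $j$ on the $\mathcal{O}$ side.) The paper's route, following Franklin, repairs both defects at once: strip the $j$ distinct parts $(m_ir)^{k_i}$ divisible by $r$ from $\lambda$, apply the $j=0$ bijection $\psi$ to the remainder $\overline\lambda\in\mathcal{O}_{0,r}$, and adjoin $(m_i)^{k_ir}$. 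Since every part of $\psi(\overline\lambda)$ has multiplicity at most $r-1$, the parts repeated at least $r$ times in the image are exactly the $j$ parts $m_i$; and injectivity is recovered because the division algorithm $a_i=k_ir+d_i$, $0\le d_i\le r-1$, applied to the multiplicities on the $\mathcal{D}$ side identifies which copies were adjoined. Any completion of your block-level approach would have to reproduce this remove--convert--reattach mechanism within each base.
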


 George Beck conjectured a companion identity to Euler's partition identity, namely \begin{equation}\label{beck1}|\mathcal O_{1,2}(n)|=|\mathcal D_{1,2}(n)|=b(n),\end{equation} where $b(n)$ is the difference between the number of parts in all partitions in $\mathcal O_{0,2}(n)$ and the number of parts in all partitions in $\mathcal D_{0,2}(n)$.  Andrews proved these identities in \cite{A17}  using generating functions.  Since then,  in a fairly short time, many articles appeared giving generalizations of this result as well as combinatorial proofs in many cases. See for example \cite{FT17, Y18, BB19, LW19, LW19b, LW19c, AB19, BW20a, BW20b}. Some authors have started referring to these companion identities as Beck-type identities. Some of the earlier generalizations \cite{Y18} gave companion identities to Glaisher's identity \eqref{gl}. Let $b_{j,r}(n)$ denote  the difference between the number of parts in all partitions in $\mathcal{O}_{j,r}(n)$ and the number of parts in all partitions in $\D_{j,r}(n)$. %, i.e., $$b_{j,r}(n)=\sum_{\l\in \O_{j,r}(n)} \ell(\l)-\sum_{\l\in \D_{j,r}(n)} \ell(\l).$$ 
 The Beck-type identity accompanying \eqref{gl} is \begin{equation}\label{bg1} |\mathcal{O}_{1,r}(n)|=|\D_{1,r}(n)|=\frac{1}{r-1}b_{0,r}(n).\end{equation}  

The main result of this article gives a Beck-type companion identity for Franklin's identity \eqref{FTid}. Let $b_{\leq j,r}(n)$ denote  the difference between the number of parts in all partitions in $\mathcal{O}_{\leq j,r}(n)$ and the number of parts in all partitions in $\D_{\leq j,r}(n)$, i.e., $$b_{\leq j,r}(n)=\sum_{\lambda\in \mathcal{O}_{\leq j,r}(n)} \ell(\lambda)-\sum_{\lambda\in \D_{\leq j,r}(n)} \ell(\lambda).$$

\begin{theorem} \label{main1} Let  $n,j, r$ be non-negative integers with  $r\geq 2$. Then, $$\frac{1}{r-1}b_{\leq j,r}(n)=(j+1)|\mathcal{O}_{j+1,r}(n)|= (j+1)|\D_{j+1,r}(n)|.$$
\end{theorem}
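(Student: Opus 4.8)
The plan is to reduce the statement to its first equality, since the second, $(j+1)|\mathcal{O}_{j+1,r}(n)|=(j+1)|\D_{j+1,r}(n)|$, is immediate from Franklin's Theorem~\ref{FT}. The core is therefore $b_{\leq j,r}(n)=(r-1)(j+1)|\mathcal{O}_{j+1,r}(n)|$, which I would establish both analytically and combinatorially through a common refined bookkeeping of the parts divisible by $r$ (the ``modular refinement'').

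For the analytic proof I would introduce three variables: $q$ marking size, $z$ marking the number of parts, and $u$ marking the number of distinct parts divisible by $r$ on the $\mathcal{O}$-side (resp.\ the number of distinct parts repeated at least $r$ times on the $\D$-side). Writing $G(z,q)=\prod_{m\ge1}(1-zq^m)^{-1}$, a part-by-part count gives
\begin{equation*}
\mathcal{O}(z,u,q)=G(z,q)\prod_{k\ge1}\bigl(1-(1-u)zq^{rk}\bigr),\qquad
\D(z,u,q)=G(z,q)\prod_{m\ge1}\bigl(1-(1-u)z^rq^{rm}\bigr).
\end{equation*}
The only difference is that $z$ enters to the first power on the $\mathcal{O}$-side and to the $r$-th power on the $\D$-side. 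Setting $z=1$ collapses both to the single series $\Phi(u,q):=\prod_{m\ge1}(1-q^m)^{-1}\prod_{k\ge1}\bigl(1-(1-u)q^{rk}\bigr)$, which is the refined form of Franklin's identity and satisfies $[u^i]\Phi(u,q)=\sum_n|\mathcal{O}_{i,r}(n)|q^n$.

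Next I would extract the parts statistic by applying $\partial_z$ at $z=1$. Taking the logarithmic derivative, the common factor $G$ contributes a term that cancels in the difference, leaving
\begin{equation*}
\partial_z\bigl[\mathcal{O}(z,u,q)-\D(z,u,q)\bigr]\Big|_{z=1}
=(r-1)\,\Phi(u,q)\sum_{k\ge1}\frac{(1-u)q^{rk}}{1-(1-u)q^{rk}},
\end{equation*}
the factor $r-1$ arising precisely from the discrepancy between the first and the $r$-th power of $z$. To pass from ``exactly'' to ``at most,'' I would apply the operator $\tfrac{1}{1-u}$ and read off $[u^j]$, using $[u^j]\tfrac{1}{1-u}F=\sum_{i\le j}[u^i]F$; the factor $1-u$ cancels, and with $w=1-u$ one checks the product-rule identity $\Phi\sum_{k\ge1}\frac{q^{rk}}{1-wq^{rk}}=\partial_u\Phi$. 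Hence
\begin{equation*}
\sum_n b_{\leq j,r}(n)q^n=(r-1)\,[u^j]\,\partial_u\Phi(u,q)=(r-1)(j+1)\,[u^{j+1}]\Phi(u,q)=(r-1)(j+1)\sum_n|\mathcal{O}_{j+1,r}(n)|q^n,
\end{equation*}
which is the claim after dividing by $r-1$.

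For the combinatorial proof I would realize each of these steps bijectively. Franklin's map matches $\mathcal{O}_{\leq j,r}(n)$ with $\D_{\leq j,r}(n)$ while changing the number of parts through the $r$-adic ``carrying'' of multiplicities (replacing $r$ equal parts $m$ by one part $rm$, and its inverse), so that $b_{\leq j,r}(n)=\sum_{\lambda}\bigl(\ell(\lambda)-\ell(\varphi(\lambda))\bigr)$. The modular refinement tracks the multiplicities of parts modulo $r$, localizing the net part-count change to individual parts divisible by $r$, each carry accounting for the factor $r-1$. I expect the main obstacle to be exactly here: assembling these local contributions into an explicit $(r-1)(j+1)$-to-one correspondence onto $\mathcal{O}_{j+1,r}(n)$ --- equivalently, showing combinatorially that the signed part-count difference is governed by a partition in $\mathcal{O}_{j+1,r}(n)$ together with a distinguished part divisible by $r$ (the factor $j+1$) and a distinguished residue in $\{1,\dots,r-1\}$ (the factor $r-1$) --- and verifying that this map is well defined and bijective.
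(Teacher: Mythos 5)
Your analytic argument is correct, and it takes a genuinely different route from the paper. The paper never proves Theorem \ref{main1} in one step: it first establishes the modular refinement, Theorem \ref{mainres}, whose generating functions track only the parts congruent to a fixed residue $t$ (and to $0$) modulo $r$; it then sums that identity over $t=1,\dots,r-1$ --- this is where its factor $r-1$ comes from --- and needs a separate argument (comparing $r$ times the number of parts divisible by $r$ on the $\mathcal{O}$-side with the nonresidual multiplicities on the $\mathcal{D}$-side) to identify $\sum_{t=1}^{r-1}E_{j,r,t}(n)$ with $b_{j,r}(n)$, yielding Theorem \ref{main}; finally it telescopes the exact-$j$ identities over $j$ to reach Theorem \ref{main1}. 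You instead mark \emph{all} parts with a single variable $z$, so that your factor $r-1$ arises from the discrepancy between $z$ (one part $rm$ on the $\mathcal{O}$-side) and $z^r$ ($r$ copies of $m$ on the $\mathcal{D}$-side), and you absorb the ``at most $j$'' condition in one stroke by multiplying by $1/(1-u)$ and recognizing $(r-1)\partial_u\Phi$, rather than telescoping. Your product formulas check out (per part $rk$ the $\mathcal{O}$-side factor is $\bigl(1-(1-u)zq^{rk}\bigr)/(1-zq^{rk})$, per part $m$ the $\mathcal{D}$-side factor is $\bigl(1-(1-u)z^rq^{rm}\bigr)/(1-zq^m)$), as do the logarithmic differentiation and the coefficient extraction $[u^j]\partial_u\Phi=(j+1)[u^{j+1}]\Phi$, and the second equality of the theorem is indeed immediate from Theorem \ref{FT}. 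What your route buys is brevity and self-containment; what it gives up is the refined identity of Theorem \ref{mainres} itself, which is the paper's main new contribution and which your argument bypasses entirely (no statement refined by residue class appears anywhere in your proof). One caveat: your ``combinatorial proof'' is only a plan with an acknowledged gap --- the $(r-1)(j+1)$-to-one correspondence is never constructed --- so as it stands the theorem rests solely on your analytic argument, which fortunately is complete.
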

The case $j=0$ gives \eqref{bg1}. Theorem \ref{main1} is obtained from the repeated application of the next theorem for which  both analytic and combinatorial proofs are given in \cite{BW20c} (which was written as an extended abstract for this article). 

\begin{theorem}\label{main} For all non-negative integers $n,j$ and all integers  $r\geq 2$, we have \begin{equation}\label{eqmain} \frac{1}{r-1}b_{j,r}(n) =(j+1)|\mathcal{O}_{j+1,r}(n)|-j|\mathcal{O}_{j,r}(n)|= (j+1)|\D_{j+1,r}(n)|-j|\D_{j,r}(n)|.\end{equation}
\end{theorem}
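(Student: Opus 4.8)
The plan is to handle all values of $j$ simultaneously by introducing a bivariate refinement of each side: let $z$ mark the total number of parts and let $w$ mark the number of distinct \emph{special} parts, where "special" means divisible by $r$ on the $\mathcal{O}$-side and repeated at least $r$ times on the $\D$-side. Concretely, I would set
\[
\mathcal{O}(z,w,q)=\sum_{\lambda} z^{\ell(\lambda)}w^{d(\lambda)}q^{|\lambda|},\qquad \mathcal{D}(z,w,q)=\sum_{\lambda} z^{\ell(\lambda)}w^{e(\lambda)}q^{|\lambda|},
\]
where the sums run over all partitions, $d(\lambda)$ is the number of distinct parts divisible by $r$, and $e(\lambda)$ is the number of distinct parts repeated at least $r$ times, so that $[w^j]\mathcal{O}=\sum_{\lambda\in\mathcal{O}_{j,r}}z^{\ell(\lambda)}q^{|\lambda|}$ and likewise for $\mathcal{D}$. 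Computing factor-by-factor over part sizes — on the $\mathcal{O}$-side each multiple $rk$ contributes $1+w\,zq^{rk}/(1-zq^{rk})$, and on the $\D$-side each part $m$ contributes $(1-z^rq^{rm})/(1-zq^m)+w\,z^rq^{rm}/(1-zq^m)$ — and recombining yields the closed forms
\begin{gather*}
\mathcal{O}(z,w,q)=\prod_{m\geq 1}\frac{1}{1-zq^m}\prod_{k\geq 1}\bigl(1-(1-w)zq^{rk}\bigr),\\
\mathcal{D}(z,w,q)=\prod_{m\geq 1}\frac{1}{1-zq^m}\prod_{k\geq 1}\bigl(1-(1-w)z^{r}q^{rk}\bigr).
\end{gather*}

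Setting $z=1$ collapses both to the same series, which recovers Franklin's identity \eqref{FTid} and shows the two right-hand sides of \eqref{eqmain} agree, so the entire content lives in the derivative $\partial_z$ at $z=1$, via $\sum_n b_{j,r}(n)q^n=[w^j]\,\partial_z(\mathcal{O}-\mathcal{D})|_{z=1}$. Writing $\mathcal{O}=PA$ and $\mathcal{D}=PB$ with $P(z,q)=\prod_{m}(1-zq^m)^{-1}$, $A=\prod_k(1-(1-w)zq^{rk})$, and $B=\prod_k(1-(1-w)z^{r}q^{rk})$, one has $A|_{z=1}=B|_{z=1}=:C$, so the terms involving $P'$ cancel at $z=1$ and $\partial_z(\mathcal{O}-\mathcal{D})|_{z=1}=P|_{z=1}\,(A'-B')|_{z=1}$. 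The key step — the modular heart of the argument — is a logarithmic-derivative comparison: because $B$ carries $z^{r}$ exactly where $A$ carries $z$, one gets $(\partial_z B)|_{z=1}=r\,(\partial_z A)|_{z=1}$, hence $(A'-B')|_{z=1}=-(r-1)(\partial_z A)|_{z=1}$. This single factor of $r$ is precisely the origin of the $r-1$ in the statement: a part divisible by $r$ is counted once, while the block of repeated parts it corresponds to under Franklin's bijection contributes $r$ copies per unit of the multiplicity.

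It then remains to identify the result with the right-hand side of \eqref{eqmain}. A short calculation gives $-(\partial_z A)|_{z=1}=(1-w)\,\partial_w C$, so that
\[
\frac{1}{r-1}\sum_n b_{j,r}(n)q^n=[w^j]\,(1-w)\,\partial_w\bigl(P|_{z=1}\,C\bigr).
\]
Since $P|_{z=1}\,C=\mathcal{O}(1,w,q)=\sum_{j,n}|\mathcal{O}_{j,r}(n)|\,w^{j}q^{n}$, and for any series $G=\sum_j g_j w^j$ one has $[w^j](1-w)\partial_w G=(j+1)g_{j+1}-j g_j$, extracting the coefficient of $w^j$ produces $(j+1)|\mathcal{O}_{j+1,r}(n)|-j|\mathcal{O}_{j,r}(n)|$, which is the $\mathcal{O}$-side of \eqref{eqmain}; the $\D$-side follows identically from $\mathcal{D}(1,w,q)=\mathcal{O}(1,w,q)$. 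I expect the genuine obstacle to be combinatorial rather than analytic: a direct bijective proof must convert this $z$-versus-$z^{r}$ asymmetry into a sign-controlled map on partitions, accounting for special parts of the image that merge with preexisting parts and tracking the $(r-1)$-fold overcount cleanly, whereas the generating-function route sidesteps that bookkeeping and makes the factor $r$ transparent.
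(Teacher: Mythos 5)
Your proof is correct: the closed product forms for your $\mathcal{O}(z,w,q)$ and $\mathcal{D}(z,w,q)$ check out, the $P'$ terms do cancel at $z=1$ because $A|_{z=1}=B|_{z=1}$, logarithmic differentiation does give $(\partial_z B)|_{z=1}=r(\partial_z A)|_{z=1}$ and $-(\partial_z A)|_{z=1}=(1-w)\partial_w C$, and the coefficient identity $[w^j](1-w)\partial_w G=(j+1)g_{j+1}-jg_j$ finishes the argument. However, this is a genuinely different route from the one in this paper. The paper deliberately proves Theorem \ref{main} by way of a modular refinement: it first establishes Theorem \ref{mainres}, in which $z$ marks only the parts congruent to a fixed residue $t \pmod r$ on the $\mathcal{O}$-side and the parts with residual multiplicity at least $t$ on the $\D$-side, and then deduces \eqref{eqmain} by summing the refined identity over $t=1,\dots,r-1$ and correcting for the parts divisible by $r$ versus the nonresidual multiplicities. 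That correction step is exactly where your key idea appears in the paper, but localized to the special parts: the identity $\overline\bD_{r}(z,w,q)=\bO_{r,0}(z^r,w,q)$, giving $\left.\partial_z\right|_{z=1}\overline\bD_{r}=r\left.\partial_z\right|_{z=1}\bO_{r,0}$, is your ``$B$ carries $z^r$ where $A$ carries $z$'' observation; likewise, your final $(1-w)\partial_w$ extraction is how the paper converts its $(1-w)\sum_m(\cdots)$ expression into $(j+1)|\mathcal{O}_{j+1,r}(n)|-j|\mathcal{O}_{j,r}(n)|$. What your argument buys is brevity and self-containedness: marking all parts at once yields clean closed forms and a short, direct proof of \eqref{eqmain} (this is essentially the kind of analytic proof the authors attribute to \cite{BW20c}). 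What the paper's longer route buys is strictly more information: the residue-by-residue statement of Theorem \ref{mainres} is the paper's main new result, is what interfaces with Franklin's bijection and the Xiong--Keith map to give combinatorial proofs, and hands back Theorem \ref{main} as a corollary — whereas your approach, as you note yourself, sidesteps rather than solves the combinatorial bookkeeping.
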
 

 Note that Theorem \ref{main} itself can be viewed as a generalization of \eqref{bg1}. However, for $j\geq 1$, the right hand side of \eqref{eqmain} is non-positive. %We remark that while \cite{LW19b} gives a another generalization of \eqref{bg1} involving the number of parts in $\O_{j,r}(n)$ (but not in $\D_{j,r}(n)$), their result does not lead to a natural generalization as in Theorem \ref{main1} nor to the further generalization to Euler pairs described in Section \ref{fg}. 

 In this article we give a new proof of Theorem \ref{main} via a modular refinement. First, we introduce the following notation. Let $r$ be a positive integer such that $r \geq 2$ and let $0\leq t\leq r-1$. Given a partition $\lambda$, we denote by $\ell_t(\lambda)$ the number of parts of $\lambda$ congruent to $t \pmod r$.  If
$i$ is a part of $\lambda$ with multiplicity $s_i$, we refer to $s_i \pmod r$ as the \textit{residual multiplicity} of $i$ in $\lambda$.  We denote by $\bar{\ell}_t(\lambda)$ the number of different parts in $\lambda$ with residual multiplicity at least $t$. For $1\leq t\leq r-1$, $E_{j,r,t}(n)$ denotes the following difference in the number of parts in the sets of partitions involved in Franklin's identity. $$E_{j,r,t}(n)= \sum_{\lambda\in \mathcal{O}_{j,r}(n)}(\ell_t(\lambda)- \ell_0(\lambda))-\sum_{\lambda \in \D_{j,r}(n)}\overline\ell_t(\lambda).$$ In section \ref{smain} we give analytic and combinatorial proofs of the following theorem. 

\begin{theorem} \label{mainres} For all integers $n,j,r,t$ with $n,j \geq 0$,  $r\geq 2$  and $1\leq t\leq r-1$, we have \begin{equation}\label{beck3}E_{j,r,t}(n)=(j+1)|\mathcal{O}_{j+1,r}(n)|-j|\mathcal{O}_{j,r}(n)|= (j+1)|\D_{j+1,r}(n)|-j|\D_{j,r}(n)|.\end{equation}\end{theorem}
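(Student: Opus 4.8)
The plan is to give two proofs and to organize both around a single bivariate series. Write $O_k(q)=\sum_n|\mathcal{O}_{k,r}(n)|q^n$ and package everything through $\Omega(w)=\sum_{k\ge0}O_k(q)w^k$. The classical Glaisher product argument, refined by the number of distinct parts divisible by $r$, gives the closed form $\Omega(w)=\prod_{m\ge1}\frac{1-(1-w)q^{rm}}{1-q^m}$, and the very same product equals $\sum_k|\mathcal{D}_{k,r}(n)|$-series; this is exactly the generating-function form of Theorem \ref{FT}, so the two right-hand sides of \eqref{beck3} coincide and it suffices to prove the first one. The key observation is that the target is a coefficient extraction: $[w^j]\,(1-w)\Omega'(w)=(j+1)O_{j+1}(q)-jO_j(q)$, where $\Omega'=\partial_w\Omega$.

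\emph{Analytic proof.} Introduce marking variables into $\Omega$: a variable $x$ marking $\ell_t$ and $y$ marking $\ell_0$ on the $\mathcal{O}$-side, producing $\Phi(w,x,y)$, and a variable $z$ marking $\overline\ell_t$ on the $\mathcal{D}$-side, producing $\Psi(w,z)$, with $\Phi(w,1,1)=\Psi(w,1)=\Omega(w)$. The $\mathcal{O}$- and $\mathcal{D}$-parts of $E_{j,r,t}$ are then $[w^j](\partial_x-\partial_y)\Phi|_{x=y=1}$ and $[w^j]\partial_z\Psi|_{z=1}$. Computing by logarithmic differentiation, each factor of $\Psi$ contributes a ratio in which the $w$-dependence cancels between numerator and denominator, giving $\Omega(w)\sum_{m\ge1}\frac{q^{mt}-q^{mr}}{1-q^{mr}}$, while the $\mathcal{O}$-side yields $\Omega(w)\big[\sum_{m\equiv t}\frac{q^m}{1-q^m}-\sum_m\frac{wq^{rm}}{(1-q^{rm})(1-(1-w)q^{rm})}\big]$. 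The elementary identity $\sum_m\frac{q^{mt}-q^{mr}}{1-q^{mr}}=\sum_{m\equiv t}\frac{q^m}{1-q^m}-\sum_m\frac{q^{rm}}{1-q^{rm}}$ makes the $t$-dependent sums cancel, so $E_{j,r,t}$ is automatically independent of $t$; the remaining $t$-free terms collapse, since the numerator $1-w-(1-w)q^{rm}$ factors as $(1-w)(1-q^{rm})$, to $\Omega(w)(1-w)\sum_m\frac{q^{rm}}{1-(1-w)q^{rm}}=(1-w)\Omega'(w)$. Extracting $[w^j]$ then gives $(j+1)O_{j+1}(q)-jO_j(q)$, as desired.

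\emph{Combinatorial proof.} Recast the right-hand side via pointed partitions: since every $\lambda\in\mathcal{O}_{k,r}(n)$ has exactly $k$ distinct parts divisible by $r$, the number $k|\mathcal{O}_{k,r}(n)|$ counts pairs $(\lambda,d)$ with $d$ a distinct part of $\lambda$ divisible by $r$; calling this set $P_k$, the target is $|P_{j+1}|-|P_j|$. On the other side, interpret $\sum\ell_t$, $\sum\ell_0$ and $\sum\overline\ell_t$ as pointed sets $A_t$, $A_0$, $B_t$. I would build a statistic-tracking version of Franklin's bijection $\mathcal{D}_{j,r}(n)\to\mathcal{O}_{j,r}(n)$ (from Glaisher's base-$r$ merging of multiplicities) and use it to create a sign-reversing involution pairing $A_t$ against $B_t$ — the combinatorial analogue of the cancellation of the two $t$-sums above — leaving an unpaired set measured by $A_0$ together with the pointed configurations that raise the number of distinct multiples of $r$ from $j$ to $j+1$, which are then matched with $P_{j+1}\sqcup P_j$ with the correct signs.

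\emph{Main obstacle.} In the analytic proof the only real work is bookkeeping the logarithmic derivatives and spotting the two cancellations; the surprising feature that the answer is independent of $t$ drops out for free, so that route is essentially mechanical once $\Phi$ and $\Psi$ are set up. The genuinely hard part is the combinatorial proof: tracking how the residual-multiplicity statistic $\overline\ell_t$ transforms under Franklin's merging/splitting map, and turning the $A_t$--$B_t$ cancellation into an explicit sign-respecting involution rather than a mere equality of cardinalities, all while controlling the jump between the strata with $j$ and with $j+1$ distinct multiples of $r$. Producing one clean map that accomplishes all three tasks simultaneously, and verifying it is a bijection on the nose, is where the effort will concentrate.
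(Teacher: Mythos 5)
Your analytic proof is correct and is essentially the paper's analytic proof: the same trivariate generating functions (your $\Phi$ packages the paper's $\bO_{r,t}$ and $\bO_{r,0}$, your $\Psi$ is $\bD_{r,t}$), the same logarithmic differentiation in the part-counting variable, the same two cancellations (the identity $\sum_m\frac{q^{tm}-q^{rm}}{1-q^{rm}}=\sum_{m\equiv t}\frac{q^m}{1-q^m}-\sum_m\frac{q^{rm}}{1-q^{rm}}$ killing the $t$-dependence, and the factorization $1-w-(1-w)q^{rm}=(1-w)(1-q^{rm})$). The one place you genuinely diverge is the finish: the paper interprets $(1-w)\sum_m\frac{q^{mr}}{1-q^{mr}}\prod_{n\neq m}\bigl(1+\frac{wq^{rn}}{1-q^{rn}}\bigr)\prod_{n\not\equiv 0}\frac{1}{1-q^n}$ combinatorially, as counting partitions in $\mathcal{O}_{j+1,r}(n)$ with a marked part divisible by $r$, whereas you observe that $\Omega(w)\sum_m\frac{q^{rm}}{1-(1-w)q^{rm}}=\partial_w\Omega(w)$ by logarithmic differentiation in $w$, so the whole series is $(1-w)\Omega'(w)$ and coefficient extraction is immediate. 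Your ending is cleaner and purely mechanical; it is a small but real improvement over the paper's. Your reduction of the second equality in \eqref{beck3} to the first (both sides have the same generating function $\Omega$, i.e., Franklin's identity) matches the paper's use of Franklin's bijection for the same purpose.

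The combinatorial "proof," however, is not a proof: the statistic-tracking version of Franklin's bijection and the sign-reversing involution pairing $A_t$ against $B_t$ are never constructed, and you yourself identify that construction as the hard part. Be aware that the paper does not go the involution route at all, and its alternative is worth knowing because it sidesteps exactly the difficulty you flag. The paper strips off the rectangles $(m_ir)^{k_i}$ to write $E_{j,r,t}(n)=\sum_{(\mathbf m,\mathbf k)}E_{0,r,t}(n-r\,\mathbf m\cdot\mathbf k)-\sum_{\lambda\in\mathcal{O}_{j,r}(n)}\ell_0(\lambda)$, invokes the already-known combinatorial proof of the base case $E_{0,r,t}(n)=|\mathcal{O}_{1,r}(n)|$ (via the Xiong--Keith bijection), and then double-counts $\sum_{(\mathbf m,\mathbf k)}|\mathcal{O}_{1,r}(n-r\,\mathbf m\cdot\mathbf k)|$ by the insertion map $\mu\mapsto\mu\cup_{i=1}^j(m_ir)^{k_i}$: each $\eta\in\mathcal{O}_{j,r}(n)$ arises exactly $\ell_0(\eta)-j$ times and each $\eta\in\mathcal{O}_{j+1,r}(n)$ exactly $j+1$ times, which gives $(j+1)|\mathcal{O}_{j+1,r}(n)|-j|\mathcal{O}_{j,r}(n)|$ after the $\ell_0$ terms cancel. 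No signed involution and no tracking of $\overline\ell_t$ through Franklin's map is needed; all the modular bookkeeping is quarantined in the $j=0$ case. Since your analytic argument is complete, the theorem is established by your proposal; but the combinatorial section as written should either be completed along the paper's reduction-plus-double-counting lines or dropped.
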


For the particular case $j=0$, we gave analytic and combinatorial proofs of Theorem \ref{mainres} in \cite{BW20b}. For the combinatorial proof, we made use of a recent bijection introduced by Xiong and Keith in \cite{XK19}.  The case $j=0$, $t=1$ is given in \cite{FT17} where it is proved analytically.   We were motivated to discover Theorem \ref{mainres} by the result in \cite{FT17}.

In section \ref{smain} we also show analytically and combinatorially how Theorem \ref{main} follows from Theorem \ref{mainres}. In particular, this leads to a new combinatorial proof of Theorem \ref{main1}. In section \ref{beck2} we introduce a second Beck-type companion identity  for Franklin's identity and give analytic and combinatorial proofs. We note that the analytic proof is also included in \cite{BW20c}.

\section{Proof of Theorems \ref{mainres} and \ref{main}} \label{smain}

\subsection{Analytic proofs} \begin{proof}[Proof of Theorem \ref{mainres}] For the remainder of this section, fix  integers $r\geq 2$ and $1\leq t\leq r-1$. Denote by $\mathcal{O}_{j,r, t}(m,n)$, respectively $\D_{j,r, t}(m,n)$,  the subset of partitions in $\mathcal{O}_{j,r}(n)$, respectively $\D_{j,r}(n)$,  with $m$ parts congruent to $t \pmod r$, respectively with $m$ different parts with residual multiplicity at least $t$. By analogy, we denote by $\mathcal{O}_{j,r, 0}(m,n)$ the subset of partitions in $\mathcal{O}_{j,r}(n)$ with $m$ parts divisible by $r$. %Additionally, denote by $\mathcal{P}_{\equiv 0}(m, n)$ the subset of partitions in $\O_{j,r}(n)$ with $m$ parts congruent to $0 \pmod r$.

We start with the trivariate generating functions for the sequences $\{|\mathcal{O}_{j,r, t}(m,n)|\}$, $\{|\D_{j,r, t}(m,n)|\}$, and $\{|\mathcal{O}_{j, r, 0}(m,n)|\}$. Let $z,w$, and $q$ be complex variables of modulus less than $1$ so that all series converge absolutely. We define $$\bO_{r,t}(z,w,q):=\sum_{n=0}^\infty\sum_{m=0}^\infty\sum_{j=0}^\infty|\mathcal{O}_{j,r, t}(m,n)|z^mw^jq^n,$$ $$\bD_{r,t}(z,w,q):=\sum_{n=0}^\infty\sum_{m=0}^\infty\sum_{j=0}^\infty|\D_{j,r, t}(m,n)|z^mw^jq^n,$$ and $${\bO}_{r,0}(z,w,q):=\sum_{n=0}^\infty\sum_{m=0}^\infty\sum_{j=0}^\infty|\mathcal{O}_{j, r, 0}(m,n)|z^mw^jq^n.$$

We have  $\bO_{r,t}(z,w,q)   =$ \begin{multline*} \prod_{n=1}^\infty(1+wq^{rn}+wq^{2(rn)}+wq^{3(rn)}+\cdots)\\ \cdot \prod_{\substack{n=0}}^\infty\frac{1}{(1-q^{rn+1})(1 - q^{rn + 2})\dotsm(1-q^{rn + t-1})(1 - zq^{rn + t})(1 - q^{rn + t + 1}) \dotsc (1 - q^{rn + r - 1})} \\  =   \prod_{n=1}^\infty\left(1+\frac{wq^{rn}}{1-q^{rn}}\right)  \cdot  \prod_{\substack{n=1\\ n\not\equiv 0\!\!\! \!\! \pmod r}}^\infty\frac{1}{1-q^n}\cdot \prod_{n=0}^\infty\frac{1-q^{rn+t}}{1-zq^{rn+t}} , \end{multline*} $ \bD_{r,t}(z,w,q) =$  \begin{multline*} \prod_{n=1}^\infty (1 + q^n + q^{2n} + \cdots + q^{(t-1)n} + zq^{tn} + zq^{(t+1)n} + \cdots + zq^{(r-1)n})(1+wq^{rn}+ wq^{(2r)n}+ \cdots)\\ 
 =   \prod_{n=1}^\infty\left(1+\frac{wq^{rn}}{1-q^{rn}}\right)\cdot \prod_{n=1}^\infty \left(1 + q^n + q^{2n} + \cdots + q^{(t-1)n} + zq^{tn} + zq^{(t+1)n} + \cdots + zq^{(r-1)n} \right),\end{multline*} and $\bO_{r,0}(z,w,q)=$ \begin{multline*} \prod_{n=1}^\infty(1+wzq^{rn}+wz^2q^{2(rn)}+wz^3q^{3(rn)}+\cdots)\cdot \prod_{\substack{n=1\\ n\not\equiv 0\!\!\! \!\! \pmod r}}^\infty\frac{1}{1-q^n}   \\  =   \prod_{n=1}^\infty\left(1+\frac{wzq^{rn}}{1-zq^{rn}}\right)\cdot \prod_{\substack{n=1\\ n\not\equiv 0\!\!\! \!\! \pmod r}}^\infty\frac{1}{1-q^n}. \end{multline*}  

Clearly, $$ \sum_{n=0}^\infty\sum_{j=0}^\infty E_{j,r,t}(n)w^jq^n  = \left.\frac{\del}{\del z}\right|_{z=1}\bigl(\bO_{r,t}(z,w,q) - \bO_{r,0}(z,w,q)-\bD_{r,t}(z,w,q)\bigr).$$ Using logarithmic differentiation, we obtain \begin{multline*}\left.\frac{\del}{\del z}\right|_{z=1}\bO_{r,t}(z,w,q)=   \prod_{n=1}^\infty\left(1+\frac{wq^{rn}}{1-q^{rn}}\right)\cdot \prod_{\substack{n=1\\ n\not\equiv 0\!\!\! \!\! \pmod r}}^\infty\frac{1}{1-q^n}\cdot \sum_{m=1}^\infty\frac{q^{mr + t}}{1-q^{mr + t}},\end{multline*}
\begin{multline*}\left.\frac{\del}{\del z}\right|_{z=1}\bD_{r,t}(z,w,q)=   \prod_{n=1}^\infty\left(1+\frac{wq^{rn}}{1-q^{rn}}\right)\cdot \prod_{n=1}^\infty\frac{1-q^{rn}}{1-q^n}  \\ \cdot \sum_{m=1}^\infty\frac{q^{tm} + q^{(t+1)m} + \cdots + q^{(r-1)m}}{1 + q^m + q^{2m} + \cdots + q^{(t-1)m} + q^{tm} + q^{(t+1)m} + \cdots + q^{(r-1)m} } \\ = \prod_{n=1}^\infty\left(1+\frac{wq^{rn}}{1-q^{rn}}\right)\cdot \prod_{n=1}^\infty\frac{1-q^{rn}}{1-q^n} \cdot \sum_{m=1}^\infty\frac{q^{tm} - q^{rm}}{1 - q^{rm}},\end{multline*} and \begin{multline*}\left.\frac{\del}{\del z}\right|_{z=1}\bO_{r,0}(z,w,q)=   \prod_{n=1}^\infty\left(1+\frac{wq^{rn}}{1-q^{rn}}\right)\cdot \prod_{\substack{n=1\\ n\not\equiv 0\!\!\! \!\! \pmod r}}^\infty\frac{1}{1-q^n} \\ \cdot \left( \sum_{m=1}^\infty\frac{1 - q^{rm}}{1 - (1 - w)q^{rm}} \cdot \frac{wq^{rm}}{(1-q^{rm})^2}\right) \\ =  \prod_{n=1}^\infty\left(1+\frac{wq^{rn}}{1-q^{rn}}\right)\cdot \prod_{\substack{n=1\\ n\not\equiv 0\!\!\! \!\! \pmod r}}^\infty\frac{1}{1-q^n}\cdot \sum_{m=1}^\infty\frac{wq^{rm}}{(1 - (1-w)q^{rm})(1 - q^{rm})}.\end{multline*}\medskip

We have $$\sum_{n=0}^\infty \frac{q^{nr+t}}{1-q^{nr+t}}= \sum_{n=0}^\infty\sum_{m=1}^\infty q^{m(nr+t)}= \sum_{m=1}^\infty q^{tm}\sum_{n=0}^\infty q^{nrm}=\sum_{m=1}^\infty\frac{q^{tm}}{1-q^{rm}}.$$

It follows that 
\begin{multline*}\sum_{n=0}^\infty\sum_{j=0}^\infty E_{j,r,t}(n) w^jq^n  =  \prod_{n=1}^\infty\left(1+\frac{wq^{rn}}{1-q^{rn}}\right) \prod_{\substack{n=1\\ n\not\equiv 0\!\!\! \!\! \pmod r}}^\infty\frac{1}{1-q^n}\\ \cdot  \sum_{m=1}^\infty\left( \frac{q^{tm}}{1 - q^{rm}} - \frac{q^{tm} - q^{rm}}{1 - q^{rm}} - \frac{wq^{rm}}{(1 - q^{rm})(1 - (1 - w)q^{rm})}\right)\\  =  \prod_{n=1}^\infty\left(1+\frac{wq^{rn}}{1-q^{rn}}\right) \prod_{\substack{n=1\\ n\not\equiv 0\!\!\! \!\! \pmod r}}^\infty\frac{1}{1-q^n} \sum_{m=1}^\infty\left( \frac{q^{rm}}{1 - q^{rm}} - \frac{wq^{rm}}{(1 - q^{rm})(1 - (1 - w)q^{rm})}\right). \end{multline*} 

Since $$\frac{wq^{mr}}{(1-q^{mr})(1-(1-w)q^{mr})}= \frac{1}{1-q^{rm}}-\frac{1}{1-(1-w)q^{rm}},$$  we have $$\sum_{m=1}^\infty\left(\frac{q^{rm}}{1-q^{rm}} - \frac{wq^{mr}}{(1-q^{mr})(1-(1-w)q^{mr})}\right)= \sum_{m=1}^\infty\frac{(1-w)q^{mr}}{1-(1-w)q^{mr}}.$$ Thus, 
\begin{multline*}\sum_{n=0}^\infty\sum_{j=0}^\infty E_{j,r,t}(n)w^jq^n  \\ =  \prod_{n=1}^\infty\left(1+\frac{wq^{rn}}{1-q^{rn}}\right) \prod_{\substack{n=1\\ n\not\equiv 0\!\!\! \!\! \pmod r}}^\infty\frac{1}{1-q^n} \left( \sum_{m=1}^\infty\frac{(1-w)q^{mr}}{1-(1-w)q^{mr}}\right)\\ = \ds (1-w)\sum_{m=1}^\infty\left(\frac{q^{mr}}{1-q^{mr}}\prod_{\substack{n=1\\ n \neq m}}^\infty\left(1+\frac{wq^{rn}}{1-q^{rn}}\right)\cdot \prod_{\substack{n=1\\ n\not\equiv 0 \pmod r}}^\infty\frac{1}{1-q^n}\right)\\ = (1-w)\sum_{n=0}^\infty\sum_{j=0}^\infty (j+1)|\mathcal{O}_{{j+1},r}(n)|w^jq^n\\ = \sum_{n=0}^\infty \sum_{j=0}^\infty ((j+1)|\mathcal{O}_{j+1,r}(n)|-j|\mathcal{O}_{j,r}(n)|)w^jq^n.\end{multline*}

To see the second to last equality above, notice that the exponent of $q$ coming from the term $\ds \frac{q^{mr}}{1-q^{mr}}$ keeps track of the part $mr$ (with multiplicity) of the partition $\lambda$ in $\mathcal{O}_{j+1,r}(n)$ and the exponent of $q$ coming from the first product keeps track of the other $j$ parts of $\lambda$ divisible by $r$ and it is weighted by $w^j$. Since each of the $j+1$ different parts of $\lambda$ divisible by $r$ can be contributed by the first term, $\lambda$ contributes $j+1$ to the coefficient of $w^jq^n$.  
\end{proof}

\begin{proof}[Proof of Theorem \ref{main}] We sum identities \eqref{beck3} for $t=1, 2, \ldots, r-1$. Then \begin{align*}\label{beck3a}\sum_{t=1}^{r-1}E_{j,r,t}(n)& =(r-1)((j+1)|\mathcal{O}_{j+1,r}(n)|-j|\mathcal{O}_{j,r}(n)|)\\ & =(r-1)( (j+1)|\D_{j+1,r}(n)|-j|\D_{j,r}(n)|).\end{align*}

It remains to show that $$\sum_{t=1}^{r-1}E_{j,r,t}(n)=
b_{j,r}(n).$$

We have \begin{align*}\sum_{t=1}^{r-1}E_{j,r,t}(n)& =\sum_{t=1}^{r-1}\left(\sum_{\lambda\in \mathcal{O}_{j,r}(n)}(\ell_t(\lambda)- \ell_0(\lambda))-\sum_{\lambda \in \D_{j,r}(n)}\overline\ell_t(\lambda)\right)\\ & =\sum_{\lambda\in \mathcal{O}_{j,r}(n)}\ell(\lambda)-r \sum_{\lambda\in \mathcal{O}_{j,r}(n)}\ell_0(\lambda)- \sum_{t=1}^{r-1}\sum_{\lambda \in \D_{j,r}(n)}\overline\ell_t(\lambda).\end{align*}

Given a partition $\lambda \in \D_{j,r}(n)$, each part of $\lambda$ is counted in $\ds \sum_{t=1}^{r-1}\sum_{\lambda\in \D_{j,r}(n)}\overline\ell_t(\lambda)$ as many times as its residual multiplicity.

If $s_i=a_i\cdot r+d_i$ is the multiplicity of $i$ in the partition $\lambda$, we refer to $a_i\cdot r$ as the \textit{nonresidual multiplicity} of $i$ in $\lambda$. Denote by $\overline\D_{j,r}(m,n)$ the subset of partitions $\lambda$ in $\D_{j,r}(n)$ such that the sum of the  nonresidual multiplicities of parts in $\lambda$ equals $m$.  Let $$\overline\bD_{r}(z,w,q):=\sum_{n=0}^\infty\sum_{m=0}^\infty\sum_{j=0}^\infty|\overline\D_{j,r}(m,n)|z^mw^jq^n.$$ Then, 
$$ \overline\bD_{r}(z,w,q) =   \prod_{n=1}^\infty\left(1+\frac{wz^rq^{rn}}{1-z^rq^{rn}}\right)\cdot \prod_{n=1}^\infty\frac{1-q^{rn}}{1-q^n}=\bO_{r,0}(z^r,w,q).$$ Thus $$\left.\frac{\del}{\del z}\right|_{z=1}\overline\bD_{r}(z,w,q)=r\left.\frac{\del}{\del z}\right|_{z=1}\bO_{r,0}(z,w,q).$$ Therefore $$\sum_{t=1}^{r-1}E_{j,r,t}(n)=
b_{j,r}(n).$$
\end{proof}

\subsection{Combinatorial proofs}

\begin{proof}[Proof of Theorem \ref{mainres}]

First, we briefly recall Franklin's bijective proof of Theorem \ref{FT} and introduce a modification that serves our purposes. Denote by $\psi:\mathcal{O}_{0,r}(n)\to\D_{0,r}(n)$ the bijection introduced by Xiong and Keith in \cite{XK19}. The reader can also find a succinct description of the bijection $\psi$ in \cite{BW20b}. Then Franklin's bijection $\varphi: \mathcal{O}_{j,r}(n)\to \D_{j,r}(n)$ is defined as follows. Let $\lambda \in \mathcal{O}_{j,r}(n)$.  Suppose the parts of $\lambda$ divisible by $r$  are $(m_ir)^{k_i}$ with $m_i, k_i > 0$ for $1\leq i\leq j$, and   $m_i$  are distinct. 
Let $\overline \lambda=\lambda \setminus \cup_{i=1}^j(m_ir)^{k_i}$ be the partition obtained from $\lambda$ by removing  all  parts equal to  $m_ir$ for $1\leq i\leq j$. Then $\overline\lambda \in \mathcal{O}_{0,r}(n - \sum_{i=1}^j k_im_i r)$. %We know it will have no parts divisible by $r$ because we have just removed all parts divisible by $r$ and  subtracting $rk$ from some part not divisible by $r$ will result in a new part not divisible by $r$. 
Let $\overline \mu=\psi(\overline \lambda)\in \D_{0,r}(n - \sum_{i=1}^j k_im_i r)$ be the image of $\overline \lambda$ under the Xiong-Keith bijection.  Finally, let $\mu=\overline \mu\cup ((m_1)^{k_1r},(m_2)^{k_2r},\ldots, (m_j)^{k_jr})$. Since the parts $m_i$, $1\leq i\leq j,$ are all distinct and they are the only parts repeated at least $r$ times, we have that  $\mu \in  \D_{j,r}(n)$.   Set $\varphi(\lambda)=\mu$.  

To describe the inverse mapping, let $\mu \in \D_{j,r}(n)$. Suppose the $j$ different parts that are repeated at least $r$ times are $m_i$, $1\leq i\leq j,$ and each $m_i$ has  multiplicity $a_i$ in $\mu$. For each $1\leq i \leq j$, write  $a_i=k_i r+d_i$ with $0\leq d_i\leq r-1$, and remove $k_ir$ parts equal to $m_i$ from $\mu$ to obtain a partition $\overline \mu \in  \D_{0,r}(n - \sum_{i=1}^j k_im_i r)$. Let $\overline \lambda =\psi^{-1}(\overline \mu) \in \mathcal{O}_{0,r}(n - \sum_{i=1}^j k_im_i r)$ be the image of $\overline \mu$ under the inverse of the Xiong-Keith bijection. Let $\lambda=\overline \lambda \cup ((m_1r)^{k_1},(m_2r)^{k_2},\ldots, (m_jr)^{k_j})$. Clearly, $\lambda \in \mathcal{O}_{j,r}(n)$. Then $\varphi^{-1}(\mu)=\lambda$.

In \cite{BW20b}, we  provided a combinatorial proof of \eqref{beck3} in the case $j = 0$, i.e. for the identity  

\begin{equation}\label{beck3-0}E_{0,r,t}(n)=|\mathcal{O}_{1,r}(n)|= |\D_{1,r}(n)|.\end{equation} %The difference in the number of parts in all partitions in $\O_{0,r}(n)$ and the number of parts in all partitions in $\mathcal{D}_{0,r}(n)$ equals $(r - 1)|\O_{1,r}(n)|$. 
We will make use of the combinatorial proof of \eqref{beck3-0}  to prove combinatorially the case $j > 0$ of \eqref{beck3}.  Because of Franklin's bijection we only need to prove the identity involving $|\mathcal{O}_{j,r}(n)|$. 

 Using Franklin's  bijection $\varphi:\mathcal{O}_{j,r}(n)\to \D_{j,r}(n)$ described above, we have 
\begin{align}\nonumber E_{j,r,t}(n)& = \sum_{\lambda\in \mathcal{O}_{j,r}(n)}(\ell_t(\lambda)- \ell_0(\lambda))-\sum_{\lambda \in \D_{j,r}(n)}\overline\ell_t(\lambda)\\ \nonumber & =  \sum_{\lambda \in \mathcal{O}_{j,r}(n)}(\ell_t(\lambda)-\ell_0(\lambda)-\overline\ell_t(\varphi(\lambda)).\end{align}

Let $\mathbf m=(m_1, m_2, \ldots, m_j)$ with $m_i$ distinct, and $\mathbf k=(k_1, k_2, \ldots, k_j)$ be two $j$-tuples of positive integers, and denote by $\mathcal{O}_{j, r}^{\mathbf m, \mathbf k} (n)$ the set of partitions $\lambda$  in $\mathcal{O}_{j, r}(n)$ in which the  $j$ different parts divisible by $r$ are precisely $(m_ir)^{k_i}$ with $m_i, k_i>0$, for $i = 1, 2, \dotsc, j$, and $m_i$ distinct.
Given  $\lambda$  in $\mathcal{O}_{j, r}^{\mathbf m, \mathbf k} (n)$, as in the proof of Franklin's identity,  we write $\ds\lambda=\overline \lambda \ds\cup_{i=1}^j (m_ir)^{k_i}$ with $\overline{\lambda}\in \mathcal{O}_{0,r}(n - \sum\limits_{i=1}^{j} rm_ik_i)$. Then, 
\begin{align*} \ell_t(\lambda)&=\ell_t(\overline{\lambda})\\  \ell_0(\lambda)&= \sum\limits_{i=1}^{j} k_i\\  \sum_{\lambda \in \mathcal{O}_{j,r}^{\mathbf m, \mathbf k}(n)}\overline\ell_t(\varphi(\lambda))&= \sum_{\overline\mu \in \D_{0,r}(n- \sum\limits_{i=1}^{j} rm_ik_i)} \overline\ell_t(\overline\mu).\end{align*} Then  $$E_{j,r,t}(n)= \sum_{(\mathbf m, \mathbf k)} E_{0,r,t}(n-r \mathbf m \cdot \mathbf k)- \sum_{\lambda\in \mathcal{O}_{j,r}(n)} \ell_0(\lambda),$$ where the first sum is over all pairs $(\mathbf m, \mathbf k)$ of $j$-tuples of positive integers  such that $\mathbf m$ has distinct components. Moreover,  $\mathbf m \cdot \mathbf k$ represents the dot product of the vectors $\mathbf m$ and $\mathbf k$.

Using the combinatorial proof of \eqref{beck3-0} we have \begin{equation}\label{diff3}E_{j,r,t}(n)= \sum_{(\mathbf m, \mathbf k)} |\mathcal{O}_{1,r}(n-r \mathbf m \cdot \mathbf k)|- \sum_{\lambda\in \mathcal{O}_{j,r}(n)} \ell_0(\lambda).\end{equation}

Next, for fixed $j$-tuples of positive integers $\mathbf m=(m_1, m_2, \ldots, m_j)$ with $m_i$ distinct, and $\mathbf k=(k_1, k_2, \ldots, k_j)$, we reinterpret $|\mathcal{O}_{1,r}(n-r \mathbf m \cdot \mathbf k)|$.
Consider the mapping $\zeta: \mu\mapsto \mu\cup_{i=1}^j(m_ir)^{k_i}$ on $\mathcal{O}_{1,r}(n-r \mathbf m \cdot \mathbf k)$. 
 If $\mu \in \mathcal{O}_{1,r}(n-r \mathbf m \cdot \mathbf k)$, then $\zeta(\mu)$ fits in exactly one of the following two cases.  

(i)  If $m_tr$ is a part of $\mu$ for some $1\leq t\leq j$, then $\zeta(\mu) \in  \mathcal{O}_{j, r}(n)$ and it contains each part $m_ir$ with multiplicity $k_i$ if $i\neq t$, and part $m_tr$ with multiplicity larger than $k_t$, or

(ii) If $\mu$ does not contain any $m_ir$, $1\leq i\leq j$, as a part, then $\zeta(\mu) \in  \mathcal{O}_{j + 1, r}(n)$ and it contains each part $m_ir$, $1\leq i\leq j$  with multiplicity $k_i$.

We obtain a bijection between $\mathcal{O}_{1,r}(n-r \mathbf m \cdot \mathbf k)$ and the union of the subsets of $ \mathcal{O}_{j, r}(n)$, respectively  $\mathcal{O}_{j+1, r}(n)$, described in (i), respectively (ii) above.

Now let $\eta \in \mathcal{O}_{j, r}(n)$. Suppose $(m_ir)^{k_i}$, $1\leq i\leq j$, $m_i$ distinct,  are the parts divisible by $r$ in $\eta$. %and let $m(\eta)$ be the total number of parts divisible by $r$ in $\eta$, i.e., $m(\eta)=\sum_{i=1}^jk_i$. 
The contribution of $\eta$ to $\sum_{(\mathbf m, \mathbf k)} |\mathcal{O}_{1,r}(n-r \mathbf m \cdot \mathbf k)|$ comes from (i) above.   For each choice of $t$,  \ $1 \leq t \leq j$,  and  \ $h_t, \ 1 \leq h_t \leq k_t - 1$, the partition $\eta$ can be written as 

$$\eta = \mu \cup_{i\neq t}(m_i r)^{k_i} \cup ((m_t r)^{h_t}),$$ with $\mu \in \mathcal{O}_{1,r}(n-rm_th_t-\sum\limits_{i\neq t}rm_ik_i)$ such that  $m_tr$ is the only part of $\mu$ divisible by $r$ and it has multiplicity $k_t-h_t\geq 1$ in $\mu$.

Thus,  $\eta$ contributes  $\sum\limits_{i=1}^{j} (k_i - 1)=\ell_0(\eta) - j$  to $\sum_{(\mathbf m, \mathbf k)} |\mathcal{O}_{1,r}(n-r \mathbf m \cdot \mathbf k)|$.

Next, let $\eta \in \mathcal{O}_{j + 1, r}(n)$. Suppose $(m_ir)^{k_i}$, $1\leq i\leq j+1$, $m_i$ distinct,  are the parts divisible by $r$ in $\eta$. The contribution of $\eta$ to $\sum_{(\mathbf m, \mathbf k)} |\mathcal{O}_{1,r}(n-r \mathbf m \cdot \mathbf k)|$ comes from (ii) above. For each choice of $t$,  \ $1 \leq t \leq j+1$, the partition $\eta$ can be written as 
$$\eta = \mu \cup_{i\neq t}(m_i r)^{k_i},$$ with $\mu \in \mathcal{O}_{1,r}(n-\sum\limits_{i\neq t}rm_ik_i)$ such that $m_tr$ is  the only  part of $\mu$ divisible by $r$ and it has multiplicity $k_t$ in $\mu$.  Thus,  $\eta$ contributes  $(j + 1)$ to $\sum_{(\mathbf m, \mathbf k)} |\mathcal{O}_{1,r}(n-r \mathbf m \cdot \mathbf k)|$.

In total, we have
  \begin{multline*}
\sum_{(\mathbf m, \mathbf k)} |\mathcal{O}_{1,r}(n-r \mathbf m \cdot \mathbf k)| =  
\\    \sum\limits_{\eta \in \mathcal{O}_{j, r}(n)} (\ell_0(\eta) - j)
+ \sum\limits_{\eta \in \mathcal{O}_{j + 1, r}(n)} (j + 1)
= \\ -j |\mathcal{O}_{j, r}(n)| + (j + 1) |\mathcal{O}_{j + 1, r} (n)|+   \sum\limits_{\eta \in \mathcal{O}_{j, r}(n)} \ell_0(\eta) .
\end{multline*}
Using identity \eqref{diff3}, this finishes the combinatorial proof of Theorem \ref{mainres}. \end{proof}

\begin{proof}[Proof of Theorem \ref{main}] As in the analytic proof, we need to show that $r$ times the number of parts divisible by $r$ in all partitions in $\mathcal{O}_{j,r}(n)$ equals the sum of the nonresidual multiplicities of parts in all partitions in $\D_{j,r}(n)$. This follows immediately from Franklin's bijective proof of Theorem \ref{FT}.

\end{proof}

\section{A second Beck-type identity}\label{beck2}

Let ${T}_{j, r} (n)$ denote the number of (different) parts with multiplicity between $r + 1$ and $2r - 1$ in all partitions in $\D_{j, r}(n)$. Let $b'_{j,r}(n)$ be the difference in the number of different parts in $\D_{j, r}(n)$ and the number of different parts in $\mathcal{O}_{j, r}(n)$. If we denote by $\bar{\ell}(\lambda)$ the number of different parts in $\lambda$, then $$b'_{j,r}(n)=\sum_{\lambda \in \D_{j, r}(n)} \bar{\ell}(\lambda)-\sum_{\lambda \in \mathcal{O}_{j, r}(n)} \bar{\ell}(\lambda).$$ In \cite{Y18}, Yang proved analytically and combinatorially that \begin{equation} \label{b20} b'_{0,r}(n)={T}_{1,r}(n).\end{equation} We note that in \cite{Y18}, ${T}_{1,r}(n)$ is viewed as the cardinality of the subset  of  $\D_{1, r}(n)$ consisting of partitions in which one part is repeated between $r + 1$ and $2r - 1$ times and all other parts appear less than $r$ times. We denote this subset by $\T(n)$. In \cite{BW20b}, we gave a new combinatorial proof of \eqref{b20}. 

Identity \eqref{b20} generalizes to a companion identity for Franklin's identity as follows. Denote by $b'_{\leq j,r}(n)$  the difference in the number of different parts in all partitions in $\D_{\leq j,r}(n)$ and   the number of different parts in all partitions in $\mathcal{O}_{\leq j,r}(n)$.  

\begin{theorem} \label{C1} Let  $n,j, r$ be non-negative integers with  $r\geq 2$. Then, $$b'_{\leq j,r}(n)={T}_{j+1, r}(n).$$
\end{theorem}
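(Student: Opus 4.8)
The plan is to reduce Theorem \ref{C1} to its base case $j=0$, exactly as Theorem \ref{mainres} was reduced to \eqref{beck3-0}, by exploiting Franklin's bijection $\varphi:\mathcal{O}_{j,r}(n)\to\D_{j,r}(n)$ together with the decomposition of a partition according to its parts divisible by $r$ (on the $\mathcal{O}$-side) or its parts repeated at least $r$ times (on the $\D$-side). First I would note that $b'_{\leq j,r}(n)=\sum_{i=0}^{j}b'_{i,r}(n)$, since $\mathcal{O}_{\leq j,r}$ and $\D_{\leq j,r}$ are the disjoint unions $\bigcup_{i=0}^{j}\mathcal{O}_{i,r}$ and $\bigcup_{i=0}^{j}\D_{i,r}$ respectively, and the statistic $\bar\ell$ is additive over these disjoint pieces. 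So the telescoping target is to prove that $\sum_{i=0}^{j}b'_{i,r}(n)=T_{j+1,r}(n)$, and the natural companion at a single level $i$ would be a statement of the shape $b'_{i,r}(n)=(\text{something in }T)_{i+1,r}(n)-(\text{something})_{i,r}(n)$ that telescopes.

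The key computation is to track how $\bar\ell$ behaves under $\varphi$. Fix the tuples $(\mathbf m,\mathbf k)$ as in the proof of Theorem \ref{mainres} and write $\lambda=\overline\lambda\cup_{i=1}^{j}(m_ir)^{k_i}\in\mathcal{O}_{j,r}^{\mathbf m,\mathbf k}(n)$ with $\overline\lambda\in\mathcal{O}_{0,r}$ and $\varphi(\lambda)=\overline\mu\cup_{i=1}^{j}(m_i)^{k_ir}$ with $\overline\mu=\psi(\overline\lambda)\in\D_{0,r}$. I would compute $\bar\ell(\varphi(\lambda))-\bar\ell(\lambda)$ in terms of $\bar\ell(\overline\mu)-\bar\ell(\overline\lambda)$ plus correction terms coming from the $j$ inserted parts: the inserted parts $m_i$ on the $\D$-side and $m_ir$ on the $\mathcal{O}$-side each contribute one distinct part, \emph{unless} one of the inserted values already occurs in $\overline\mu$ or $\overline\lambda$, which is where the interaction (and the combinatorial content) lives. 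Summing over all $(\mathbf m,\mathbf k)$ and using the base case \eqref{b20} in the form $b'_{0,r}=T_{1,r}$ should convert the $\overline\mu,\overline\lambda$ part into a count of parts with residual multiplicity in the critical range, while the insertion corrections should reorganize into the difference $T_{j+1,r}(n)-T_{j,r}(n)$ (or a similar telescoping pair). I would also want an analytic cross-check via generating functions: differentiating, with respect to a variable marking distinct parts, the same products $\bD_{r,t}$ and $\bO_{r,t}$ that appear in the analytic proof of Theorem \ref{mainres}, since Theorem \ref{C1}'s statistic $\bar\ell$ is dual to the $\ell$ statistic already handled there.

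The main obstacle I expect is the bookkeeping of \emph{coincidences}: when an inserted part $m_ir$ (respectively $m_i$) coincides with a part already present in $\overline\lambda$ (respectively $\overline\mu$), the count of distinct parts does not simply add one, and these coincidences are precisely what distinguishes parts repeated at least $r$ times from those repeated exactly $r{+}1$ through $2r{-}1$ times. Correctly identifying which configurations feed $T_{j+1,r}$ versus $T_{j,r}$ requires a careful case analysis analogous to cases (i) and (ii) in the proof of Theorem \ref{mainres}, and getting the telescoping to land exactly on $T_{j+1,r}(n)$ rather than on some sum $\sum_{i}(\pm)T_{i,r}(n)$ with leftover boundary terms is the delicate point. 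A clean way to control this is to prove the single-level companion identity first (mirroring how Theorem \ref{main} sits under Theorem \ref{main1}), verify it telescopes, and only then sum from $i=0$ to $j$; I would therefore state and prove the level-$j$ refinement as a lemma before deducing Theorem \ref{C1}.
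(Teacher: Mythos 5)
Your proposal takes essentially the same route as the paper: Theorem \ref{C1} is proved there by repeated application of Theorem \ref{b2}, $b'_{j,r}(n)=T_{j+1,r}(n)-T_{j,r}(n)$, which is exactly the single-level telescoping lemma you propose to isolate first, and the paper's combinatorial proof of that lemma uses the same ingredients you outline — Franklin's bijection, the $(\mathbf m,\mathbf k)$ decomposition, Yang's base case \eqref{b20}, and careful bookkeeping of the coincidence corrections (the paper's $S_{j,r}^{\mathbf m,\mathbf k}(n)$ terms). The boundary term you flag as delicate is simply $T_{0,r}(n)=0$ (no part in a partition of $\D_{0,r}(n)$ can have multiplicity between $r+1$ and $2r-1$), so the telescoping lands exactly on $T_{j+1,r}(n)$.
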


To prove Theorem \ref {C1}, we  apply the next theorem repeatedly. 

\begin{theorem} \label{b2} For all non-negative integers $n,j,r$ with $r\geq 2$, we have
$$b'_{j,r}(n)={T}_{j+1, r}(n) - {T}_{j, r}(n).$$
\end{theorem}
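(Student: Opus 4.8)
The plan is to give an analytic proof by logarithmic differentiation, exactly parallel to the proof of Theorem~\ref{mainres} but refining partitions by their number of \emph{different} parts instead of by residual length. First I would introduce, with $z$ marking the number of different parts, $w$ marking the statistic $j$, $q$ marking the size, and $\sigma_m := q^m + q^{2m} + \cdots + q^{(r-1)m}$, the two generating functions
\[
D(z,w,q) = \prod_{m=1}^\infty\left(1 + z\sigma_m + \frac{wzq^{rm}}{1-q^m}\right),\qquad
O(z,w,q) = \prod_{\substack{m=1\\ m\not\equiv 0 \pmod r}}^\infty\left(1+\frac{zq^m}{1-q^m}\right)\prod_{m=1}^\infty\left(1+\frac{wzq^{rm}}{1-q^{rm}}\right),
\]
so that $\sum_{n,j} b'_{j,r}(n)\,w^jq^n = \left.\partial_z\right|_{z=1}(D-O)$, together with a series $D_T(y,w,q)$ in which $y$ marks the parts of multiplicity in $\{r+1,\dots,2r-1\}$, obtained by splitting each local factor according to multiplicity $0$, $1,\dots,r-1$, $r$, $r+1,\dots,2r-1$, and $\ge 2r$; then $\sum_{n,j} T_{j,r}(n)\,w^jq^n = \left.\partial_y\right|_{y=1}D_T$.

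A cheap preliminary reduction does most of the structural work: since $T_{0,r}(n)=0$, an index shift gives $\sum_{n,j}(T_{j+1,r}(n)-T_{j,r}(n))w^jq^n = \tfrac{1-w}{w}\sum_{n,j}T_{j,r}(n)w^jq^n$, so the theorem is equivalent to the single power-series identity $\left.\partial_z\right|_{z=1}(D-O) = \tfrac{1-w}{w}\left.\partial_y\right|_{y=1}D_T$. To attack this I would first record the simplification that the local factor of $D$ at $z=1$ equals $\tfrac{1-(1-w)q^{rm}}{1-q^m}$ (using $(1-q^m)(1+\sigma_m)=1-q^{rm}$); this incidentally re-proves $D(1,w,q)=O(1,w,q)=:G(w,q)$, the common prefactor guaranteed by Franklin's identity \eqref{FTid}. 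Logarithmic differentiation then factors $G(w,q)$ out of all three derivatives and puts the common denominator $1-(1-w)q^{rm}$ in each summand; using the telescoping identity $(1-q^m)(q^{(r+1)m}+\cdots+q^{(2r-1)m}) = q^{(r+1)m}-q^{2rm}$ on the $D_T$ side, each of the two sides collapses to $G(w,q)$ times a sum of the form $\sum_{m\ge 1} N_m/(1-(1-w)q^{rm})$.

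The main obstacle --- indeed the only step with any computational content --- is verifying that these two collapsed sums coincide. The decisive algebraic fact is that the combined numerator over the common denominator, $q^m-q^{rm}-(1-w)(q^{(r+1)m}-q^{2rm})$, factors as $(q^m-q^{rm})(1-(1-w)q^{rm})$, so the denominator cancels and the whole discrepancy telescopes to $\sum_{m\ge 1}(q^m-q^{rm}) - \sum_{m\not\equiv 0 \pmod r}q^m = 0$; once this cancellation is seen, the identity, and hence the theorem, follows. I would also give a combinatorial proof mirroring the proof of Theorem~\ref{mainres}: apply Franklin's bijection $\varphi$ to write $b'_{j,r}(n)=\sum_{\lambda\in\mathcal{O}_{j,r}(n)}\big(\bar\ell(\varphi(\lambda))-\bar\ell(\lambda)\big)$, decompose over the divisible-part data $(\mathbf m,\mathbf k)$, and invoke Yang's base case $b'_{0,r}=T_{1,r}$ from \eqref{b20}. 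The delicate point here, which has no analogue in Theorem~\ref{mainres}, is a correction term measuring how often a removed value $m_i$ already occurs as a part of $\psi(\overline\lambda)$; reconciling this correction with the difference $T_{j+1,r}(n)-T_{j,r}(n)$ is where the combinatorial argument will require the most care.
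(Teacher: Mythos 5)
Your analytic proof is correct and takes essentially the same route as the paper's analytic proof: the same trivariate generating functions, logarithmic differentiation at $z=1$, and the same decisive cancellation $q^m-q^{rm}-(1-w)\bigl(q^{(r+1)m}-q^{2rm}\bigr)=(q^m-q^{rm})\bigl(1-(1-w)q^{rm}\bigr)$, the only cosmetic difference being that you compare against an explicit generating function $D_T$ for $T_{j,r}$ (via $T_{0,r}(n)=0$ and the $\tfrac{1-w}{w}$ shift), whereas the paper recognizes its collapsed sum directly as $(1-w)\sum_{n,j}T_{j+1,r}(n)\,w^jq^n$. Your combinatorial sketch also mirrors the paper's second proof, but the step you defer is exactly where the paper does its real work: it shows the $\zeta$-images account for $T_{j+1,r}(n)$ plus the parts of multiplicity greater than $2r$ not divisible by $r$, while $\sum_{(\mathbf m,\mathbf k)}S_{j,r}^{\mathbf m,\mathbf k}(n)$ counts all parts of multiplicity greater than $r$ not divisible by $r$, so the difference is $T_{j+1,r}(n)-T_{j,r}(n)$ -- since your analytic argument is complete, this omission does not affect correctness.
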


\begin{proof}[Analytic Proof] The argument of this proof is similar to that of Theorem \ref{main} and we omit many of the details. 
 Denote by $\mathcal{O}'_{j,r}(m,n)$, respectively $\D'_{j,r}(m,n)$,  the subset of partitions in $\mathcal{O}_{j,r}(n)$, respectively $\D_{j,r}(n)$,  with $m$ distinct parts. The trivariate generating functions for the sequences $\{|\mathcal{O}'_{j,r}(m,n)|\}$ and $\{|\D'_{j,r}(m,n)|\}$ are respectively 
 \begin{multline*}\bO'_r(z,w,q):=\prod_{n=1}^\infty(1+wzq^{rn}+wzq^{2(rn)}+wzq^{3(rn)}+\cdots)\cdot \prod_{\substack{n=1\\ n\not\equiv 0\!\!\! \!\! \pmod r}}^\infty\left(1 + \frac{zq^n}{1-q^n}\right) \\  =   \prod_{n=1}^\infty\left(1+\frac{wzq^{rn}}{1-q^{rn}}\right)\cdot \prod_{\substack{n=1\\ n\not\equiv 0\!\!\! \!\! \pmod r}}^\infty\left(1 + \frac{zq^n}{1-q^n}\right), \end{multline*}
and   \begin{multline*}\bD'_r(z,w,q):= \prod_{n=1}^\infty(1+zq^{n}+zq^{2n}+\cdots +zq^{(r-1)n}+wzq^{rn}+ wzq^{(r+1)n}+ \cdots) \\  = \prod_{n=1}^\infty(1 - z + (z + zq^{n}+zq^{2n}+\cdots +zq^{(r-1)n})(1+wq^{rn}+ wq^{(2r)n}+ \cdots))\\ 
 =   \prod_{n=1}^\infty \left( 1 - z + z \frac{1-q^{rn}}{1-q^n}\left(1+\frac{wq^{rn}}{1-q^{rn}}\right)\right).\end{multline*}
 
  As in the proof of Theorem \ref{main},
  \begin{multline*}\ds \sum_{n=0}^\infty\sum_{j=0}^\infty b'_{j,r}(n)w^jq^n  = \left.\frac{\del}{\del z}\right|_{z=1}\bigl(\bD'_r(z,w,q)-\bO'_r(z,w,q)\bigr)=\\
 \bD'_r(1,w,q) \cdot \left(\sum_{m=1}^\infty \left( 1 - \frac{1 - q^m}{1 - (1-w)q^{rm}} \right) - \sum_{m=1}^\infty \frac{wq^{rm}}{1- (1-w)q^{rm}} - \sum_{\substack{m=1\\ m\not\equiv 0\!\!\! \!\! \pmod r}}^\infty q^m \right)= \end{multline*}   \begin{multline*}
  (1 - w) \sum_{m = 1}^\infty \left( \frac{q^{(r + 1)m} - q^{2rm}}{1 - q^{m}}  \prod_{\substack{n=1\\ n \neq m}}^\infty\frac{1 - ( 1- w)q^{rn}}{1-q^{rn}}\cdot  \prod_{\substack{n=1\\ n \neq m}}^\infty \frac{1-q^{rn}}{1-q^n} \right)  = \\ (1 - w)\sum_{m = 1}^\infty \left( (q^{(r + 1)m} + q^{(r + 2)m} + \dotsm + q^{(2r - 1)m}) \cdot    \prod_{\substack{n=1\\ n \neq m}}^\infty\left(1+\frac{wq^{rn}}{1-q^{rn}}\right)\cdot  \prod_{\substack{n=1\\ n \neq m}}^\infty \frac{1-q^{rn}}{1-q^n} \right)= \\ (1 - w) \sum_{n=0}^\infty\sum_{j=0}^\infty {T}_{j+1, r}(n) w^jq^n=   \sum_{n=0}^\infty\sum_{j=0}^\infty ({T}_{j+1, r}(n) - {T}_{j, r}(n)) w^jq^n .\end{multline*}
\end{proof}

\begin{proof}[Combinatorial Proof] 
To count the difference in the number of distinct parts in all partitions in $\D_{j , r}(n)$ and the number of distinct parts in all partitions in $\mathcal{O}_{j, r}(n)$, we use the inverse of Franklin's  bijection $\varphi^{-1}:\D_{j,r}(n)\to \mathcal{O}_{j,r}(n)$. We have

\begin{equation}\label{diff}b'_{j,r}(n)= \sum_{\lambda \in \D_{j,r}(n)}\bar{\ell}(\lambda)- \sum_{\lambda \in \mathcal{O}_{j,r}(n)}\bar{\ell}(\lambda)= \sum_{\lambda \in \D_{j,r}(n)}(\bar{\ell}(\lambda)-\bar{\ell}(\varphi^{-1}(\lambda)).\end{equation}

Let $\mathbf m=(m_1, m_2, \ldots, m_j)$ with $m_i$ distinct, and $\mathbf k=(k_1, k_2, \ldots, k_j)$ be two $j$-tuples of positive integers, and denote by $\D_{j, r}^{\mathbf m, \mathbf k} (n)$ the set of partitions $\lambda$  in $\D_{j, r}(n)$ such that  the  $j$ different parts of $\lambda$ repeated at least  $r$ times are precisely $m_i$,  $i = 1, 2, \dotsc, j$, and the nonresidual multiplicity of each $m_i$ is $rk_i$. We also denote by $S_{j,r}^{\mathbf m, \mathbf k}(n)$ the number of different parts in all partitions  in $\D_{j, r}^{\mathbf m, \mathbf k} (n)$ that appear  in a partition with multiplicity at least $r$ but not divisible by $r$.

Let $\lambda$ be a partition in $\D_{j, r}^{\mathbf m, \mathbf k} (n)$.  When $\ds\lambda=\overline \lambda \ds\cup_{i=1}^j (m_i)^{rk_i}$ is mapped to $\ds\varphi^{-1} (\lambda)=\psi^{-1}( \overline\lambda)\ds\cup_{i=1}^j (m_ir)^{k_i}$, the contribution of $\lambda$ to $\bar{\ell}(\lambda)-\bar{\ell}(\varphi^{-1}(\lambda))$ is as follows: \medskip

 $0$  from mapping $(m_i)^{rk_i}$ to $(m_ir)^{k_i},$ $i = 1, 2, \dotsc , j$, and \medskip

  $\bar{\ell}(\overline{\lambda}) - \bar{\ell}(\psi^{-1}(\overline{\lambda}))-s(\overline{\lambda})$, where $s(\overline{\lambda})$ is the number of different parts of $\overline{\lambda}$ equal to $m_i$,  $i = 1, 2, \dotsc , j$.
  
  Then, \begin{align*} b'_{j,r}(n)& =\sum_{({\mathbf m, \mathbf k} )}\left(\sum_{\lambda \in \D_{j,r}^{\mathbf m, \mathbf k} (n)}(\bar{\ell}(\overline\lambda)-\bar{\ell}(\psi^{-1}(\overline\lambda))-S_{j,r}^{\mathbf m, \mathbf k}(n)\right)\\ & =\sum_{({\mathbf m, \mathbf k} )} (b'_{0,r}(n-r\mathbf m\cdot \mathbf k)-S_{j,r}^{\mathbf m, \mathbf k}(n)).\end{align*}

Using the combinatorial proof of \eqref{b20}, we have

  \begin{equation}\label{b'}
 b'_{j,r}(n)=\sum_{({\mathbf m, \mathbf k} )}(|\mathcal{T}_r(n - r\mathbf m\cdot \mathbf k)| -S_{j,r}^{\mathbf m, \mathbf k}(n)).
\end{equation}

Next, for fixed $j$-tuples of positive intergers $\mathbf m=(m_1, m_2, \ldots, m_j)$ with $m_i$ distinct, and $\mathbf k=(k_1, k_2, \ldots, k_j)$, we reinterpret $\mathcal{T}_{r}(n -  r\mathbf m\cdot \mathbf k)$.

Consider $\zeta: \mu\mapsto \mu\cup_{i=1}^j(m_i)^{rk_i}$ as a mapping  on $\mathcal{T}_{r}(n - r\mathbf m\cdot \mathbf k)$.
If $\mu \in \mathcal{T}_{r}(n - r\mathbf m\cdot \mathbf k)$, then $\mu$ contains exactly one part, $\alpha$, that is repeated more than $r$ but less than $2r$ times, and all other parts in $\mu$ are repeated less than $r$ times. So $\zeta(\mu)$ fits in exactly one of the following two cases.  

(i)  If $\alpha = m_t$ for some $1\leq t\leq j$, then $\zeta(\mu) \in  \D_{j, r}(n)$ and it contains each part $m_i$ with nonresidual multiplicity equal to $rk_i$ if $i\neq t$, and part $\alpha=m_t$ with multiplicity larger than $r+rk_t$ but less than $2r+rk_t$ (i.e., the nonresidual multiplicity of $\alpha$ is $(k_t+1)r$ and its multiplicity is not divisible by $r$), or

(ii) If $\alpha$ does not equal  $m_i$ for any $1\leq i\leq j$, then $\zeta(\mu) \in  \D_{j + 1, r}(n)$ and it contains each part $m_i$, $1\leq i\leq j$  with nonresidual multiplicity equal to $rk_i$, and part $\alpha$ with multiplicity larger than $r$ but less than $2r$.

We obtain a bijection between $\mathcal{T}_{r}(n - r\mathbf m\cdot \mathbf k)$ and the union of the subsets of $ \D_{j, r}(n)$, respectively  $\D_{j+1, r}(n)$, described in (i), respectively (ii), above. 

Notice that in (i), the multiplicity of part $\alpha$ is greater than $2r$ and not divisible by $r$.

Then, each partition $\eta\in  \D_{j, r}(n)$ contributes to $\sum_{({\mathbf m, \mathbf k} )}|\mathcal{T}_r(n - r\mathbf m\cdot \mathbf k)|$ the number of parts in $\eta$
 that have multiplicity greater than $2r$ and not divisible by $r$ and  each partition $\nu \in \D_{j + 1, r}(n)$ contributes to $\sum_{({\mathbf m, \mathbf k} )}|\mathcal{T}_r(n - r\mathbf m\cdot \mathbf k)|$ the number of parts in $\nu$ with multiplicity larger than $r$ but less than $2r$. Thus $\sum_{({\mathbf m, \mathbf k} )}|\mathcal{T}_r(n - r\mathbf m\cdot \mathbf k)|$ equals ${T}_{j+1,r}(n)$ plus the total number of parts with multiplicity larger than $2r$ and not divisible by $r$ in all partitions in  $\D_{j, r}(n)$.

%To obtain the contribution of all partitions $\lambda \in \D_{j,r}(n)$ to \eqref{diff}, we consider this process for all possible $j$-tuples of positive integers $\mathbf m=(m_1, m_2, \ldots, m_j)$ and $\mathbf k=(k_1, k_2, \ldots, k_j)$.

%Now let $\eta \in \D_{j, r}(n)$. Suppose $m_i$, $1\leq i\leq j$,  are the distinct parts with multiplicity greater than $r$ in $\eta$. Let $\mathcal{s}'_{2r}(\eta)$ be the number of parts in $\eta$
% that have multiplicity greater than $2r$ and not divisible by $r$. 
%Then $\eta$ contributes $\mathcal{s}'_{2r}(n)$ to $\sum_{({\mathbf m, \mathbf k} )}|\mathcal{T}_r(n - r\mathbf m\cdot \mathbf k)|$.

%Next, let  $\eta \in \D_{j + 1, r}(n)$. Suppose $m_i$, $1\leq i\leq j+1$ are the parts with multiplicity greater than $r$ in $\eta$. The partition $\eta$ contributes $\mathcal{T}_{j + 1, r}$, the number of parts in all partitions in $D_{j + 1, r}(n)$ with multiplicity more than $r$ but less than $2r$.

Next, we observe that   $\sum_{({\mathbf m, \mathbf k} )}S_{j,r}^{\mathbf m, \mathbf k}(n)$ is the total number of parts in all partitions in $D_{j,r}(n)$ that have multiplicity larger than $r$ and not divisible by $r$. 

It follows from \eqref{b'} that
\begin{align*}
b'_{j,r}(n)  = {T}_{j + 1, r}(n) - {T}_{j, r}(n).
\end{align*}

\end{proof}

\section{Concluding Remarks} \label{fg}

%In \cite{BW20a} we showed that analogous identities to \eqref{bg1} and  \eqref{b20} hold for all Euler pairs of order $r$. 

Let $S_1$ and $S_2$ be subsets of the positive integers. We define 
$\widetilde{\mathcal{O}}_{j,r}(n)$ to be  the set of partitions of $n$ with exactly $j$ different parts from $rS_1$ and all other parts from $S_2$ and  $\widetilde\D_{j,r}(n)$ to be the set of partitions of $n$ with  parts in $S_1$ and exactly $j$ different parts repeated at least $r$ times.  Subbarao \cite{S71} proved the following theorem. 
\begin{theorem} \label{sub-thm} Let $r\geq 2$. Then, $|\widetilde{\mathcal{O}}_{0,r}(n)|=|\widetilde\D_{0,r}(n)|$ for all non-negative integers $n$ if and only if $rS_1\subseteq S_1$ and $S_2=S_1\setminus rS_1$. \end{theorem}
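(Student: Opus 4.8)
The plan is to reduce the equivalence to a statement about generating functions and then apply the uniqueness of the representation of a formal power series as an infinite product $\prod_{s\ge 1}(1-q^s)^{c_s}$. First I would record the two relevant generating functions. A partition counted by $\widetilde\D_{0,r}(n)$ has all of its parts in $S_1$, each occurring at most $r-1$ times, so
$$\sum_{n\ge 0}|\widetilde\D_{0,r}(n)|\,q^n=\prod_{s\in S_1}\bigl(1+q^s+\cdots+q^{(r-1)s}\bigr)=\prod_{s\in S_1}\frac{1-q^{rs}}{1-q^s},$$
whereas a partition counted by $\widetilde{\mathcal{O}}_{0,r}(n)$ has all of its parts in $S_2$, giving
$$\sum_{n\ge 0}|\widetilde{\mathcal{O}}_{0,r}(n)|\,q^n=\prod_{s\in S_2}\frac{1}{1-q^s}.$$
Because multiplication by $r$ is injective on $\Z_{>0}$, we have $\prod_{s\in S_1}(1-q^{rs})=\prod_{u\in rS_1}(1-q^u)$, so the partition identity holds for every $n$ if and only if these two generating functions coincide, which, after clearing the (invertible) denominators, is equivalent to the single formal identity
\begin{equation}
\prod_{s\in S_1}(1-q^s)=\prod_{s\in S_2}(1-q^s)\cdot\prod_{u\in rS_1}(1-q^u). \tag{$\ast$}
\end{equation}

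The hard part is the uniqueness lemma that lets me read off the sets from $(\ast)$: if $\prod_{s\ge 1}(1-q^s)^{a_s}=\prod_{s\ge 1}(1-q^s)^{b_s}$ as formal power series with $a_s,b_s\in\Z$, then $a_s=b_s$ for all $s$. I would prove this by taking formal logarithms; since the coefficient of $q^N$ in $\log\prod_s(1-q^s)^{c_s}$ is $-\tfrac{1}{N}\sum_{d\mid N}d\,c_d$, the hypothesis yields $\sum_{d\mid N}d\,a_d=\sum_{d\mid N}d\,b_d$ for every $N$, and strong induction on $N$ (equivalently, M\"obius inversion) gives $a_N=b_N$. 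Writing each side of $(\ast)$ in this normal form, the exponent of $(1-q^s)$ is $\mathbf{1}[s\in S_1]$ on the left and $\mathbf{1}[s\in S_2]+\mathbf{1}[s\in rS_1]$ on the right, so $(\ast)$ is equivalent to
$$\mathbf{1}[s\in S_1]=\mathbf{1}[s\in S_2]+\mathbf{1}[s\in rS_1]\qquad\text{for all }s\ge 1.$$

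Finally I would extract the two conditions. The left-hand side lies in $\{0,1\}$ while the right-hand side is a sum of two nonnegative integers, so they cannot both equal $1$; this forces $S_2\cap rS_1=\varnothing$. Under this disjointness the right-hand side is $\mathbf{1}[s\in S_2\cup rS_1]$, and the displayed equality becomes $S_1=S_2\sqcup rS_1$, which is exactly the conjunction $rS_1\subseteq S_1$ and $S_2=S_1\setminus rS_1$. Conversely, if these hold then $S_1$ is the disjoint union $S_2\sqcup rS_1$, the exponent identity holds term by term, hence so does $(\ast)$, and therefore the partition identity holds for all $n$; this last implication is just the classical Glaisher telescoping $\prod_{s\in S_1}\frac{1-q^{rs}}{1-q^s}=\prod_{s\in S_2}\frac{1}{1-q^s}$. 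I expect the uniqueness lemma to be the main obstacle: it is the step that promotes the one-directional Glaisher computation into a genuine characterization, and in particular it is what delivers the disjointness $S_2\cap rS_1=\varnothing$ rather than the weaker conclusion $S_1=S_2\cup rS_1$.
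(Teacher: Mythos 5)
You cannot be compared against an in-paper argument here, because the paper does not prove Theorem \ref{sub-thm}: it is Subbarao's theorem, quoted from \cite{S71} as background for the concluding remarks. Judged on its own, your proof is correct and complete, and it is essentially the classical generating-function argument for this characterization (the route of Andrews \cite{A69} for $r=2$ and of Subbarao \cite{S71} in general): translate both counts into generating functions, clear the formally invertible denominators to reach the single product identity $(\ast)$, and then invoke uniqueness of the exponents in a representation $\prod_{s\ge 1}(1-q^s)^{c_s}$, which you establish correctly via formal logarithms and M\"obius inversion (the coefficient computation $-\frac{1}{N}\sum_{d\mid N}d\,c_d$ is right, and strong induction on $N$ closes it). Your closing assessment is also accurate: the ``if'' direction is just Glaisher-type telescoping, and the uniqueness lemma is precisely what upgrades it to a characterization, forcing $S_2\cap rS_1=\varnothing$ and hence $S_2=S_1\setminus rS_1$ rather than merely $S_1=S_2\cup rS_1$.

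One definitional caveat is worth recording. You take $\widetilde{\mathcal{O}}_{0,r}(n)$ to count partitions into parts from $S_2$ with no further restriction, which is Subbarao's formulation. If one instead reads the paper's phrasing literally, so that partitions in $\widetilde{\mathcal{O}}_{0,r}(n)$ are additionally forbidden from containing parts of $rS_1$, then the ``only if'' direction becomes false: for $r=2$, $S_1=\N$ and $S_2=(2\N-1)\cup\{2\}$, that literal reading makes $\widetilde{\mathcal{O}}_{0,2}(n)$ the set of partitions into odd parts, so Euler's identity holds for all $n$, yet $S_2\neq S_1\setminus 2S_1$. So your implicit interpretation is the one under which the theorem is true, and your generating function for the $\mathcal{O}$-side is the right one.
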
 Andrews \cite{A69}  first discovered this result for $r=2$ and called a pair $(S_1,S_2)$ such that $|\widetilde{\mathcal{O}}_{0,2}(n)|=|\widetilde\D_{0,2}(n)|$  an \textit{Euler pair} since the pair $S_1=\N$ and $S_2=2\N-1$ gives  Euler's identity. By analogy, Subbarao called a pair $(S_1,S_2)$ such that $|\widetilde{\mathcal{O}}_{0,r}(n)|=|\widetilde\D_{0,r}(n)|$  an \textit{Euler pair of order $r$}.  %For examples of Euler pairs of order $r$, we refer the reader to \cite{S71} or \cite{BW20a}.  
In \cite{BW20a},  we showed that, if $(S_1,S_2)$ is an Euler pair,  the identity of Theorem \ref{sub-thm} has companion Beck-type identities analogous to \eqref{bg1} and \eqref{b20}. 

 It is straight forward to show that if $(S_1,S_2)$ is an Euler pair of order $r$, then $|\widetilde{\mathcal{O}}_{j,r}(n)|=|\widetilde\D_{j,r}(n)|$. In \cite{BW20c}, we remark that a similar argument to \cite{BW20a} establishes analogues of Theorems \ref{main} and \ref{b2} and thus analogues of Theorems \ref{main1} and \ref{C1} for all Euler pairs of order $r$. Denote by  $\widetilde b_{j,r}(n)$ the difference in the number of parts in $\widetilde{\mathcal{O}}_{j,r}(n)$ and  the number of parts in $\widetilde\D_{j,r}(n)$. Denote  by $\widetilde b'_{j,r}(n)$ the  analogous difference of the number of different parts. Define $\widetilde b_{\leq j,r}(n)$  and $\widetilde b'_{\leq j,r}(n)$ in analogy to  $b_{\leq j,r}(n)$  and $b'_{\leq j,r}(n)$. Let $\widetilde{{T}}_{j, r} (n)$ denote the number of parts with multiplicity between $r + 1$ and $2r - 1$ in all partitions in $\widetilde D_{j, r}(n)$.   

\begin{theorem}  \label{last} For all non-negative integers $n,j$ and all integers  $r\geq 2$, we have \begin{enumerate}

\item $\ds \frac{1}{r-1} \widetilde b_{\leq j,r}(n)=(j+1)| \widetilde{\mathcal{O}}_{j+1,r}(n)|= (j+1)|\widetilde \D_{j+1,r}(n)|,$

\item $\ds \frac{1}{r-1}\widetilde b_{j,r}(n) =(j+1)|\widetilde{\mathcal{O}}_{j+1,r}(n)|-j|\widetilde{\mathcal{O}}_{j,r}(n)|= (j+1)|\widetilde\D_{j+1,r}(n)|-j|\widetilde\D_{j,r}(n)|$,

\item $\widetilde b'_{\leq j,r}(n)=\widetilde{{T}}_{j+1, r}(n),$

\item $\widetilde b'_{j,r}(n)=\widetilde{{T}}_{j+1, r}(n) - \widetilde{{T}}_{j, r}(n)$.

 \end{enumerate}
\end{theorem}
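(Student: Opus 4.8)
The plan is to establish parts (2) and (4) first and then derive parts (1) and (3) from them by a telescoping summation, exactly as Theorems \ref{main1} and \ref{C1} are deduced from Theorems \ref{main} and \ref{b2}. Since a partition has \emph{exactly} $j$ different parts from $rS_1$ for precisely one value of $j$, there are disjoint decompositions $\widetilde{\mathcal{O}}_{\leq j,r}(n)=\bigsqcup_{i=0}^{j}\widetilde{\mathcal{O}}_{i,r}(n)$ and $\widetilde\D_{\leq j,r}(n)=\bigsqcup_{i=0}^{j}\widetilde\D_{i,r}(n)$, whence $\widetilde b_{\leq j,r}(n)=\sum_{i=0}^{j}\widetilde b_{i,r}(n)$ and $\widetilde b'_{\leq j,r}(n)=\sum_{i=0}^{j}\widetilde b'_{i,r}(n)$. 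Summing the right-hand side of (2) over $i=0,\dots,j$ telescopes to $(j+1)|\widetilde{\mathcal{O}}_{j+1,r}(n)|$ (the $i=0$ term $-i|\widetilde{\mathcal{O}}_{i,r}(n)|$ vanishes), which gives (1); summing (4) telescopes to $\widetilde T_{j+1,r}(n)-\widetilde T_{0,r}(n)$, and $\widetilde T_{0,r}(n)=0$ since no part of a partition in $\widetilde\D_{0,r}(n)$ is repeated more than $r-1$ times, which gives (3).

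For (2) and (4) I would first build a generalized Franklin bijection $\widetilde\varphi:\widetilde{\mathcal{O}}_{j,r}(n)\to\widetilde\D_{j,r}(n)$. Subbarao's Theorem \ref{sub-thm} supplies a base bijection $\widetilde\psi:\widetilde{\mathcal{O}}_{0,r}(n)\to\widetilde\D_{0,r}(n)$, and one defines $\widetilde\varphi$ verbatim as in the proof of Theorem \ref{FT}: given $\lambda\in\widetilde{\mathcal{O}}_{j,r}(n)$ with $rS_1$-parts $(m_ir)^{k_i}$ (the $m_i\in S_1$ distinct), strip them to land in $\widetilde{\mathcal{O}}_{0,r}$, apply $\widetilde\psi$, and reattach each $m_i$ with multiplicity $k_ir$. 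The two Euler-pair conditions are precisely what make this well defined: $rS_1\subseteq S_1$ guarantees that the reattached $m_i$ lie in $S_1$, while $S_2=S_1\setminus rS_1$ guarantees that stripping the $rS_1$-parts leaves all remaining parts in $S_2$ and that the reattached $m_i$ are the unique $j$ parts of multiplicity at least $r$. This yields $|\widetilde{\mathcal{O}}_{j,r}(n)|=|\widetilde\D_{j,r}(n)|$, so in both (2) and (4) the $\widetilde\D$-equalities follow once the $\widetilde{\mathcal{O}}$-equalities are known, and it suffices to prove the latter.

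The $\widetilde{\mathcal{O}}$-forms of the $j=0$ cases of (2) and (4) are exactly the Euler-pair analogues of \eqref{bg1} and \eqref{b20} established in \cite{BW20a}. To pass to general $j$ I would transcribe the combinatorial arguments of Sections \ref{smain} and \ref{beck2}: decompose $\widetilde{\mathcal{O}}_{j,r}(n)$ according to the tuple $(\mathbf m,\mathbf k)$ recording its $j$ distinct $rS_1$-parts $(m_ir)^{k_i}$, peel these off to reduce to the $j=0$ identity on $\widetilde{\mathcal{O}}_{0,r}(n-r\mathbf m\cdot\mathbf k)$, and then reinterpret $\sum_{(\mathbf m,\mathbf k)}|\widetilde{\mathcal{O}}_{1,r}(n-r\mathbf m\cdot\mathbf k)|$ (resp.\ the corresponding $\widetilde T$-sum) through the attachment map $\zeta:\mu\mapsto\mu\cup_{i}(m_ir)^{k_i}$, splitting into the case $\zeta(\mu)\in\widetilde{\mathcal{O}}_{j,r}(n)$ and the case $\zeta(\mu)\in\widetilde{\mathcal{O}}_{j+1,r}(n)$. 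The identical count of contributions then produces the coefficients $j+1$ and $-j$. Alternatively, the analytic proofs transfer once the products $\prod_{n\geq1}(1+wq^{rn}/(1-q^{rn}))$ and $\prod_{r\nmid n}(1-q^n)^{-1}$ are replaced by $\prod_{s\in S_1}(1+wq^{rs}/(1-q^{rs}))$ and $\prod_{s\in S_2}(1-q^s)^{-1}$; the single-variable partial-fraction cancellations driving the computation are insensitive to the index set over which one sums.

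I expect the main obstacle to be confirming that $\widetilde\varphi$ is a well-defined bijection in this generality, i.e.\ that every membership condition (all parts in $S_2$ after stripping, all parts in $S_1$ and exactly $j$ repeated parts after reattaching) is preserved; this is exactly where the hypotheses $rS_1\subseteq S_1$ and $S_2=S_1\setminus rS_1$ are used and cannot be dropped. Once this bijection and the two base identities of \cite{BW20a} are in hand, the remaining bookkeeping is identical to the classical arguments, with $r\mathbb{N}$ and $\mathbb{N}\setminus r\mathbb{N}$ replaced throughout by $rS_1$ and $S_2$.
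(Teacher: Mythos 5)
Your proposal is correct and takes essentially the same route the paper intends: prove parts (2) and (4) by combining a generalized Franklin bijection (whose well-definedness rests exactly on $rS_1\subseteq S_1$ and $S_2=S_1\setminus rS_1$) with the Euler-pair $j=0$ identities of \cite{BW20a}, via the peel-off/reattach arguments of Sections~\ref{smain} and~\ref{beck2}, then deduce parts (1) and (3) by telescoping, just as Theorems~\ref{main1} and~\ref{C1} follow from Theorems~\ref{main} and~\ref{b2}. The paper only sketches this proof (deferring to \cite{BW20a} and \cite{BW20c}), and you correctly sidestep the one approach that would fail, namely routing the argument through an Euler-pair analogue of the modular refinement in Theorem~\ref{mainres}, which the paper explicitly notes is not available.
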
 

Since there are infinite families of Euler pairs of order $r$, we obtain infinite families of new Beck-type identities. 

It is natural to ask if Theorem \ref{mainres} has an analog for Euler pairs or order $r$. It is not clear to us what such an analog would be in general. The main difficulty arises from the fact that $S_2$ can contain multiples of $r$ (though not multiples of $r^2$). It is also not clear what should replace the division algorithm in $S_1$. We leave it as an open question whether a meaningful analogue of Theorem \ref{mainres} for Euler pairs of order $r$ exists.

\bigskip

%\noindent\textit{Department of Mathematics,
%University of Craiova, 200585 Craiova, Romania\\
%mircea.merca@profinfo.edu.ro}

\end{document}